\documentclass[11pt,reqno]{amsart}
\usepackage[tmargin=1in,bmargin=1in,rmargin=1in,lmargin=1in]{geometry}
\usepackage[mathscr]{eucal}
\usepackage[breaklinks=true]{hyperref}

\usepackage{xcolor}
\usepackage{graphicx}
\usepackage{amsmath,amsthm,amsfonts,amssymb,tocvsec2}
\usepackage{mathdots}
\usepackage{mathtools}
\usepackage{caption}
\usepackage{subcaption}
\usepackage{float}
\usepackage{multicol}

\usepackage{algorithm}
\usepackage{algpseudocode}

\usepackage{listings}
\lstset{language=Python, 
	numbers=left,
	basicstyle=\ttfamily,
	xleftmargin=0pt,
	showstringspaces=false}

\theoremstyle{plain}
\newtheorem{theorem}{Theorem}[section]

\newtheorem{lemma}[theorem]{Lemma}

\newtheorem{utheorem}{\textrm{\textbf{Theorem}}}

\theoremstyle{definition}
\newtheorem{defn}[theorem]{Definition}

\newtheorem{rem}[theorem]{Remark}

\numberwithin{equation}{section}

\newcommand{\ceil}[1]{\left\lceil #1 \right\rceil}

\begin{document}
	\title[Constructing strictly sign regular matrices of all sizes and sign patterns]{Constructing strictly sign regular matrices of all sizes and sign patterns}
	
	\author{Projesh Nath Choudhury and Shivangi Yadav}
	\address[P.N.~Choudhury]{Department of Mathematics, Indian Institute of Technology Gandhinagar, Gujarat 382355, India}
	\email{\tt projeshnc@iitgn.ac.in}
	\address[S.~Yadav]{Department of Mathematics, Indian Institute of Technology Gandhinagar, Gujarat 382355, India; Ph.D. Student}
	\email{\tt shivangi.yadav@iitgn.ac.in, shivangi97.y@gmail.com}
	
	\date{\today}
	
	\begin{abstract}
		The class of strictly sign regular (SSR) matrices has been extensively studied by many authors over the past century, notably by Schoenberg, Motzkin, Gantmacher, and Krein. A classical result of Gantmacher--Krein assures the existence of SSR matrices for any dimension and sign pattern. In this article, we provide an algorithm to explicitly construct an SSR matrix of any given size and sign pattern. (We also provide in an Appendix, a Python code implementing our algorithm.) To develop this algorithm, we show that one can extend an SSR matrix by adding an extra row (column) to its border, resulting in a higher order SSR matrix. Furthermore, we show how  inserting a suitable new row/column between any two successive rows/columns of an SSR matrix results in a matrix that remains SSR. We also establish analogous results for strictly sign regular $m \times n$ matrices of order $p$ for any $p \in [1, \min\{m,n\}]$.
	\end{abstract}
	
	\subjclass[2020]{15B48, 15A83, 15A15, 15--04}
	
	\keywords{Strictly sign regular matrices, matrix completion, SSR construction algorithm}
	
	\maketitle
	
	\vspace*{-11mm}
	\settocdepth{section}
	\tableofcontents
	
	\section{Introduction}
	
	Let $m,n\geq p \geq1$ be integers. An $m\times n$ matrix $A$ is said to be \textit{strictly sign regular of order $p$} (SSR$_p$) if all its minors of size $k\leq p$ have the identical sign $\epsilon_k\in\{\pm1\}$. We say $A$ is \textit{strictly sign regular} (SSR) if $A$ is SSR$_p$ for $p = \min\{m,n\}$. If the signs of the minors are all positive, then the matrix $A$ is called \textit{totally positive} (TP). If one replaces `minor' with `non-zero minor' in the preceding definitions, this yields \textit{sign regular matrices} (SR) (respectively \textit{totally non-negative matrices} (TN)). 
	
	This class of matrices was first studied by Schoenberg in 1930 to characterize the variation diminishing property. Variation diminution is a fundamental property -- and was used by Motzkin \cite{Mot36}, Gantmacher--Krein \cite{GK50}, and Brown--Johnstone--MacGibbon \cite{BJM81} to characterize SSR and SR matrices with various technical constraints: (i)~on the size of the matrix, (ii)~on its rank, or (iii)~on the sign sequences considered. In \cite{CY-VDP23}, we removed all these constraints and in a sense ended this line of investigation by: (i)~providing a complete characterization when an arbitrary matrix is SSR -- including with a specific sign pattern, (ii)~strengthening Motzkin's characterization of SR matrices without any rank constraint, and (iii)~providing a single-vector test for detecting the (strict) sign regular property of a matrix using variation diminution. We continued our study of SSR and SR matrices in \cite{CY-LPP24}, where we classified all surjective linear operators that preserve these matrices -- with arbitrary and for every fixed sign pattern. (In the special case of TP matrices, the parallel question of understanding their entrywise preservers was recently resolved in \cite{ BGKP20}.) All of these works require many test TP and SSR matrices, which makes explicitly finding/constructing such examples a fundamental question. This is what motivated the present paper.
	
	SSR matrices (including TP matrices) have long been studied in numerous theoretical and applied branches of mathematics, including analysis, cluster algebras, combinatorics, Gabor analysis, interpolation theory and splines, matrix theory, probability and statistics, and representation theory \cite{BGKP20,BFZ96,Bre95,FP12,FZ02,GRS18,K68,Karlinsplines,KW14}. Given their rich applications, it naturally raises the following questions: (i)~Can one construct a strictly sign regular matrix of any size and sign pattern? (ii)~Can a row/column be added to a strictly sign regular matrix so that the resulting matrix is again SSR? While Gantmacher--Krein \cite{GK50} explained why SSR matrices of all sizes and all sign patterns exist, and that they are dense in the SR matrices of the same size, they did not explain (nor could we find later works in the literature) how to construct explicit examples of such matrices -- i.e., the two questions above.
	
	This paper provides a positive answer to both of these questions. We propose an algorithm that outputs an SSR matrix for any specified size and sign pattern. (Such algorithms have been carried out for other classes of positive matrices; see e.g.\ \cite{AFJPS24, TZ21}.) Our algorithm is based on establishing the fact that every SSR matrix can be embedded in a higher order SSR matrix ``touching'' one of the four borders. We now formally state this result; in it and henceforth, we will use \textit{`line'} to indicate either a row or column of the matrix.
	
	\begin{utheorem}\label{A}
		Given integers $m,n\geq1$ and an $m\times n$ SSR matrix, it is possible to add a line to any of its borders such that the resulting matrix remains SSR. If minors of a larger size occur, they can be made either all positive or all negative.
	\end{utheorem}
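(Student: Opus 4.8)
\emph{Overview.} The plan is to use the symmetries of the SSR class to reduce to a single border, then to convert ``SSR'' into a finite system of strict \emph{linear} inequalities on the new line, and finally to solve that system by an explicit construction. First, transposition together with reversal of the row order and of the column order each carry an SSR matrix to an SSR matrix, altering the sign pattern in a completely controlled way; composing these identifies the four bordering problems, so it suffices to append a new rightmost column $v\in\rn m$ to the given $m\times n$ SSR matrix $A$. If $m\le n$ then $\min\{m,n+1\}=\min\{m,n\}$, so no minors of a new size appear and the pattern $(\epsilon_1,\dots,\epsilon_p)$ must be preserved; if $m>n$ then minors of the new size $n+1$ occur, and we must moreover be free to prescribe their common sign $\epsilon_{n+1}\in\{\pm1\}$.

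\emph{From SSR to linear inequalities.} A minor of $[A\mid v]$ that avoids the last column is a minor of $A$ and already carries the correct sign. A minor that uses the last column equals $\det\bigl(A[I,J]\mid v_I\bigr)$ for some rows $I=\{i_1<\dots<i_k\}$ and columns $J=\{j_1<\dots<j_{k-1}\}\subseteq[n]$; expanding along $v_I$,
\[
\det\bigl(A[I,J]\mid v_I\bigr)=\sum_{\ell=1}^{k}(-1)^{\ell+k}v_{i_\ell}\,\det A[I\setminus\{i_\ell\},J]
=\epsilon_{k-1}\sum_{\ell=1}^{k}(-1)^{\ell+k}v_{i_\ell}\,w^{I,J}_\ell ,
\]
where $w^{I,J}_\ell:=\bigl|\det A[I\setminus\{i_\ell\},J]\bigr|>0$ because $A$ is SSR. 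Hence $[A\mid v]$ is SSR with the required pattern exactly when $\sgn v_i=\epsilon_1$ for all $i$ and, for every admissible triple $(I,J,k)$,
\[
\sgn\Bigl(\sum_{\ell=1}^{k}(-1)^{\ell+k}v_{i_\ell}w^{I,J}_\ell\Bigr)=\epsilon_{k-1}\epsilon_k \qquad(2\le k\le\min\{m,n\}),
\]
together with the same relation with $\epsilon_{k-1}\epsilon_k$ replaced by $\epsilon_n\epsilon_{n+1}$ when $k=n+1$ (the case $m>n$). Each condition is a strict linear inequality in $v$, i.e.\ an open half-space, and, taken in isolation, each is clearly satisfiable — for either choice of $\epsilon_{n+1}$ — since $A$ being SSR forces the relevant linear functional not to vanish identically. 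So everything reduces to the \emph{joint} consistency of this finite family of linear inequalities.

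\emph{Constructing $v$: the crux.} A family of individually feasible linear inequalities need not be jointly feasible, so this is the heart of the matter, and it must be effective since the algorithm will call it. The plan is to build $v$ explicitly, defining its coordinates so that their ratios are suitable functions of the ratios of minors of $A$ and of the target pattern $\epsilon$: keeping a ratio far from the thresholds imposed by the lower-order inequalities lets the top-scale term dominate its alternating sum and fixes the corresponding sign one way, whereas keeping it just inside such a threshold lets a genuine two-term cancellation fix that sign the other way. Verifying that a \emph{single} $v$ can be steered to satisfy every $(I,J,k)$ simultaneously is combinatorial and rests on the identities and inequalities that the minors of an SSR matrix necessarily satisfy among themselves (Sylvester-type determinantal identities and the resulting size-graded monotonicity of minors). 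I expect the main obstacle to be precisely this \emph{uniformity in the sign pattern} $\epsilon$: crude choices of $v$ — monotone, or with widely separated coordinates — realize only special patterns such as the totally positive one or $\epsilon_k=(-1)^{k(k-1)/2}$, so the calibration must be tailored to $\epsilon$ while still respecting all orders $k$ at once.

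\emph{The $\pm$ freedom.} When $m>n$, the size-$(n+1)$ inequalities involve all coordinates of $v$ and are not implied by the lower orders; one checks that the cone carved out by the orders $\le n$ meets both the region on which all size-$(n+1)$ minors are positive and the one on which they are all negative — equivalently, the construction above can be driven toward either value of $\epsilon_{n+1}$ — which gives the last sentence of Theorem~\ref{A}. (A non-constructive alternative for mere existence: $[A\mid 0]$ is sign regular and lies in the closure of the SSR matrices, and feeding this together with the single-vector variation-diminishing criterion for SSR from \cite{CY-VDP23} into a perturbation argument yields an SSR matrix $[A\mid v]$; but the explicit $v$ is what the algorithm needs.)
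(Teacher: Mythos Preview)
Your reduction to a finite family of strict linear inequalities in $v$ is correct, as is the symmetry reduction to a single border. But the section you label ``the crux'' is not a proof: it is a description of the difficulty together with an admission that you have not overcome it (``I expect the main obstacle to be precisely this uniformity in the sign pattern $\epsilon$''). A family of individually feasible open half-spaces may have empty intersection, and nothing you write --- neither the vague appeal to Sylvester-type identities nor the remark about steering ratios --- establishes joint feasibility for an arbitrary $\epsilon$. The non-constructive perturbation idea at the end is also incomplete: that $[A\mid 0]$ is SR does not by itself imply it is approximable by SSR matrices of the \emph{specific} extended pattern $(\epsilon_1,\dots,\epsilon_{n+1})$ while keeping the $A$-block fixed.

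The paper's argument supplies exactly the missing mechanism, and it is structurally different from what you sketch. First, Karlin's theorem (Theorem~\ref{Theorem_Karlin}) reduces the check to \emph{contiguous} minors only. Second, the new column is sought not in raw coordinates $v_i$ but as an alternating combination of the existing columns, $\boldsymbol c=\sum_{i}(-1)^{i-1}y_i\boldsymbol c^{i}$ with $y_i>0$ (Lemma~\ref{SSR_col_relation_general} shows this form is forced). In a $k\times k$ contiguous minor containing the new column, the variables $y_1,\dots,y_{k-1}$ drop out and the coefficient of $y_k$ is a $k\times k$ minor of $A$ with known sign $\epsilon_k$. This makes the system \emph{triangular}: choose any $y_n>0$, then choose $y_{n-1}$ larger than an explicit bound, and so on down to $y_1$. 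For $m>n$ this produces an SSR$_n$ matrix with all $(n+1)$-minors equal to zero, and a separate small perturbation of the first $m-n$ entries of $\boldsymbol c$ (Theorem~\ref{Theorem_m>n_column_insertion}) then pushes those minors to either sign, giving the $\pm$ freedom. Your coordinatewise formulation lacks this triangular structure, which is why the joint-feasibility step stalls.
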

	
	This theorem shows that an SSR matrix can always be extended by adding a line at any one or all four of its borders. It also reveals that, upon adding a line, if any minor of a new size appears, the choice of its sign is in our hands. We prove Theorem~\ref{A} in Section~\ref{Section_SSR_boundary_line_insertion}, and subsequently use this result to provide an algorithm for constructing an SSR matrix in Section~\ref{Section_SSR_Construction}. Moreover, Theorem~\ref{A} leads to the question: Can we extend an SSR matrix by adding a new column/row between any two successive columns/rows? We address this affirmatively in the following theorem.
	
	\begin{utheorem}\label{B}
		Let $m,n\geq1$ be integers and $\epsilon=(\epsilon_1,\ldots,\epsilon_{\min\{m,n\}})$ be a sign pattern. Then, a line can be inserted between any two consecutive columns (or rows) of an $m\times n$ SSR$(\epsilon)$ matrix (see Definition~\ref{defn1}--(v)) to form an $m\times(n+1)$ (or $(m+1)\times n$) SSR$(\epsilon^{\prime})$ matrix where $\epsilon^{\prime}_i=\epsilon_i$ for $i=1,\ldots,\min\{m,n\}$. If minors of size $\min\{m,n\}+1$ occur, then $\epsilon_{\min\{m,n\}+1}^{\prime}$ can be made either positive or negative.
	\end{utheorem}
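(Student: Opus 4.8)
The plan is to reduce everything to inserting a \emph{column} — the row case then follows by transposing, since $A^{T}$ is SSR$(\epsilon)$ whenever $A$ is (transposition preserves every minor) — and to build the new column as a small perturbation of the midpoint of its two prescribed neighbours. Write $A=[a_1\mid\cdots\mid a_n]$, fix $j\in\{1,\dots,n-1\}$, and set $v_0:=\tfrac12(a_j+a_{j+1})$. The starting observation is that $B_0:=[a_1\mid\cdots\mid a_j\mid v_0\mid a_{j+1}\mid\cdots\mid a_n]$ is ``almost SSR'': expanding $v_0$ by multilinearity, any minor of $B_0$ that does \emph{not} use the inserted column together with \emph{both} $a_j$ and $a_{j+1}$ reduces to a nonzero combination of $k\times k$ minors of $A$, all of the single sign $\epsilon_k$; while any minor that uses the inserted column and both neighbours has two proportional columns and hence vanishes. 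In particular, when $\min\{m,n+1\}\le 2$ there are no bad minors and $v=v_0$ already works; so assume $\min\{m,n+1\}\ge 3$.

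Next I would take $v=v_0+\delta u$ with $\delta>0$ small and $u$ to be determined. Since $v$ occupies one column, every minor of $B:=[a_1\mid\cdots\mid a_j\mid v\mid a_{j+1}\mid\cdots\mid a_n]$ is affine in $\delta$; the minors nonzero in $B_0$ keep their sign for all small $\delta>0$, while each ``bad'' minor equals \emph{exactly} $\delta$ times the determinant of the submatrix of $[a_{S_<}\mid a_j\mid u\mid a_{j+1}\mid a_{S_>}]$ picked out by some row set $I$, for sets $S_<\subseteq\{1,\dots,j-1\}$, $S_>\subseteq\{j+2,\dots,n\}$ with $|I|=|S_<|+|S_>|+3=:k$. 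Thus the task is to choose $u$ so that all such determinants have sign $\epsilon_k$. For the sizes $k\le n$ I would take $u=Az$ with $z_j=z_{j+1}=0$; expanding by multilinearity kills the repeated-column terms and leaves $\epsilon_k\sum_t z_t\,\sigma(t,S_<,S_>)\,\lvert\det A[I;\,\cdot\,]\rvert$, where $\sigma$ is the permutation sign sorting the column indices $S_<,j,t,j+1,S_>$ into increasing order. A direct computation of $\sigma$ shows that if one imposes $\sgn z_{j-\ell}=(-1)^{\ell}$ and $\sgn z_{j+\ell}=(-1)^{\ell-1}$, and lets $|z_t|$ decay fast as $t$ leaves the gap in the interleaved order $j+2,\,j-1,\,j+3,\,j-2,\dots$, then the dominant surviving term is positive, and for fast enough decay so is the whole sum — uniformly over $I,S_<,S_>$.

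There remains the case $m>n$, where $B$ has minors of size $n+1$; each of these uses all of $a_1,\dots,a_n$, so the $Az$ part is inert for them, and I would instead add a term $\eta w$ with $w$ supplied by Theorem~\ref{A}: choose $w$ so that $[A\mid w]$ is SSR with all its $(n+1)\times(n+1)$ minors of sign $\epsilon'_{n+1}(-1)^{n-j}$, and then sliding this column past the $n-j$ columns to its right makes the corresponding minors of $B$ have the prescribed sign $\epsilon'_{n+1}$ (which is therefore free). One fixes the constants in the order: $z$ first (its decay rate depending only on $A$ and $j$), then $w$, then $\eta>0$ small enough that the $\eta w$ correction cannot reverse the signs already secured for the bad minors of size $\le n$, then $\delta>0$ small enough that the minors of $B$ meeting the inserted column but not both neighbours still carry their $B_0$-signs. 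With $u=Az+\eta w$ this produces the desired SSR$(\epsilon')$ matrix.

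The crux — and the only genuine obstacle — is the sign bookkeeping: one must evaluate $\sigma(t,S_<,S_>)$ precisely, check that its dependence on $S_<,S_>$ is a parity which the prescribed signs of $z$ cancel, and verify that the leading term really dominates (which is what dictates the magnitude hierarchy of $z$ and the nested smallness of $\eta$ and $\delta$). Everything else is routine, and the freedom to make $\epsilon'_{\min\{m,n\}+1}$ either sign (relevant only when $m\ne n$) is exactly the freedom in Theorem~\ref{A} for the column $w$, together with the free sign of $\delta$.
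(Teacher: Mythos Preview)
Your approach is correct and genuinely different from the paper's. The paper proceeds indirectly: it first uses Theorem~\ref{A} to embed the given $m\times n$ matrix into a larger square SSR matrix $A_1$ of even order so that the desired gap sits exactly in the middle, then proves a separate technical result (Theorem~\ref{Theorem_line_insertion_middle}) that a column of the symmetric form $\sum_{i}(-1)^{i-1}y_i(\boldsymbol{c^{n-i+1}}+\boldsymbol{c^{n+i}})$ can be inserted at the centre of such an $A_1$ (the $y_i>0$ chosen inductively from $y_n$ downward), and finally extracts the relevant submatrix. You instead work directly with $A$: the ``midpoint plus small perturbation'' framing cleanly isolates which minors are problematic, and your interleaved-decay choice of $z$ plays the same role as the paper's top-down choice of the $y_i$ --- both produce an alternating linear combination of columns centred at the gap with magnitudes growing outward. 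The paper's route buys a symmetric setting and sidesteps the sign bookkeeping you flag as the crux; your route buys directness and makes transparent why the size-$(n{+}1)$ minors need separate treatment. That crux does check out (for each contiguous $S_<,S_>$ the first surviving $t$ in your interleaved order satisfies $\sgn(z_t)\,\sigma(t,S_<,S_>)=+1$, and contiguous minors suffice by Theorem~\ref{Theorem_Karlin}), though you have not actually carried out the computation. Two small corrections: your formula $\sgn z_{j+\ell}=(-1)^{\ell-1}$ conflicts with $z_{j+1}=0$ at $\ell=1$ (presumably you intend $\ell\ge 2$); and the ``free sign of $\delta$'' does not contribute to the freedom in $\epsilon'_{n+1}$, since flipping $\delta$ would also flip the bad minors of size $\le n$ --- that freedom comes entirely from the choice of $w$ via Theorem~\ref{A}.
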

	
	Note that the above theorem holds for every sign pattern, particularly the TP case when $\epsilon_i=1$ for all $i=1,\ldots,\min\{m,n\}$. (This case was studied in \cite{Smith2000} in 2000.) We prove Theorem~\ref{B} in Section~\ref{Section_line_insertion_middle}.
	
	\begin{rem}
		It is natural to ask for SSR$_p$ analogues of the above main results. In Sections~\ref{Section_SSR_boundary_line_insertion}~and~\ref{Section_line_insertion_middle}, we indeed prove similar results for strictly sign regular matrices of order $p$, following their SSR counterparts. See Remark~\ref{Remark_SSR_p_border} and Theorem~\ref{Theorem_SSR_k_middle}.
	\end{rem}
	
	Also note that the analogous question for the ``non-strict'' SR matrices has a trivial answer: one can insert a zero row or column or clone any row or column.
	
	We conclude this section on a philosophical note. The numerous applications of totally positive matrices in several branches of pure and applied mathematics stem from their vast collection of examples. The main results of this paper help construct examples of strictly sign regular matrices of arbitrary sizes and sign patterns. We hope these examples will lead to a wide range of applications in as broad a spectrum as the restricted subclass of totally positive matrices.
	\bigskip
	
	\section{Line insertion at the borders}\label{Section_SSR_boundary_line_insertion}
	
	This section shows that a new SSR matrix can be constructed from any given SSR matrix by adding a line to its border. Furthermore, we show that if square submatrices of a new size appear after inserting a line, we can control the sign of their determinants, making them either all positive or all negative. We start by recording  some definitions and notations.
	\begin{defn}\label{defn1}
		Let $m,n\geq1$ be integers throughout this manuscript.
		\begin{itemize}
			\item[(i)] A \textit{contiguous submatrix} is one whose rows and columns are indexed by consecutive positive integers, and a \textit{contiguous minor} is the determinant of a contiguous square submatrix.
			
			\item[(ii)] An anti-diagonal matrix whose all anti-diagonal elements are one is called an \textit{exchange matrix}. We denote an $n\times n$ exchange matrix by $P_n$:
			\[P_n:=\begin{pmatrix}
				0 & \cdots & 1 \\
				\vdots & \iddots & \vdots \\
				1 & \cdots & 0 \\
			\end{pmatrix} \in \{ 0, 1 \}^{n\times n}.\]
			
			\item[(iii)] $[n]$ represents the set of the first $n$ positive integers. For $m\leq n$, define $[m,n]:=\{m,m+1,\ldots,n\}$.
			
			\item[(iv)] For an $m\times n$ matrix $A$ and index sets $I\subseteq[m]$, $J\subseteq[n]$, let $A_{I\times J}$ denote the submatrix of $A$ whose rows and columns are indexed by $I$ and $J$, respectively.
			
			\item[(v)] We write SSR$(\epsilon)$ to indicate that a matrix is SSR with the sign pattern $\epsilon$.
			
		\end{itemize}
	\end{defn}
	
	We next prove a lemma that is crucial in showing our main results.
	
	\begin{lemma}\label{SSR_col_relation_general}
		Let $A=[\boldsymbol{c}^1|\cdots |\boldsymbol{c}^n]\in\mathbb{R}^{(n-1)\times n}$ be an SSR$(\epsilon)$ matrix for $\epsilon \in \{\pm 1\}^{n-1}$. Then, for $1\leq k\leq n$, the $k^{th}$ column of $A$ can be expressed as a real linear combination of its remaining columns of the form:
		\begin{equation}\label{SSR_col_relation_form}
			\boldsymbol{c}^k=\displaystyle{\sum_{\substack{i=1 \\ i\neq k}}^{n}}x_i\boldsymbol{c}^i, ~ \text{where sign}(x_i)=\begin{cases}
				(-1)^i, ~ &\text{if k is odd}, \\
				(-1)^{i-1}, ~ &\text{if k is even}.
			\end{cases}
		\end{equation}
	\end{lemma}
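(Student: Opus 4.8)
The plan is to exploit the unique (up to scaling) linear dependence among the $n$ columns of the ``wide-by-one'' matrix $A\in\mathbb{R}^{(n-1)\times n}$, and to read off the signs of its coefficients from strict sign regularity.

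First I would write down the classical kernel vector built from maximal minors. For $i\in[n]$ set $d_i:=\det(A_{[n-1]\times([n]\setminus\{i\})})$, the $(n-1)\times(n-1)$ minor of $A$ obtained by omitting column $i$, and put $\boldsymbol{v}:=((-1)^1 d_1,\ldots,(-1)^n d_n)^{\top}$. I claim $A\boldsymbol{v}=\boldsymbol{0}$, i.e.\ $\sum_{i=1}^n(-1)^i d_i\,\boldsymbol{c^i}=\boldsymbol{0}$. To verify this, fix a row index $r\in[n-1]$, prepend the $r$-th row of $A$ to $A$ to form an $n\times n$ matrix with two identical rows, hence of determinant $0$, and expand this determinant along its first row: the cofactor of the $(1,i)$ entry is exactly $(-1)^{1+i}d_i$, so $0=\sum_{i=1}^n(-1)^{1+i}a_{ri}d_i$, which is $-1$ times the $r$-th coordinate of $\sum_i(-1)^i d_i\boldsymbol{c^i}$. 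Since $r\in[n-1]$ was arbitrary, the claim holds.

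Next I would bring in the hypothesis: since $A$ is SSR$(\epsilon)$, all of its maximal $(n-1)\times(n-1)$ minors share the common sign $\epsilon_{n-1}\in\{\pm1\}$; in particular each $d_i\neq 0$, so each $v_i=(-1)^i d_i\neq 0$ with $\text{sign}(v_i)=(-1)^i\epsilon_{n-1}$. Now fix $k\in[n]$. Since $v_k\neq 0$, dividing the relation $\sum_i v_i\boldsymbol{c^i}=\boldsymbol{0}$ by $v_k$ and isolating the $k$-th term gives $\boldsymbol{c^k}=\sum_{i\neq k}x_i\boldsymbol{c^i}$ with $x_i=-v_i/v_k$, whence $\text{sign}(x_i)=-(-1)^i\epsilon_{n-1}/((-1)^k\epsilon_{n-1})=(-1)^{i-k+1}$. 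Finishing is a one-line parity check: if $k$ is odd then $-k+1$ is even, so $\text{sign}(x_i)=(-1)^i$; if $k$ is even then $-k+1$ is odd, so $\text{sign}(x_i)=(-1)^{i-1}$, which is precisely \eqref{SSR_col_relation_form}.

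I do not anticipate a genuine obstacle: the only point demanding care is the sign bookkeeping---correctly pinning down the factor $(-1)^i$ in the kernel vector via the repeated-row Laplace expansion, and confirming that the two parity cases for $k$ reproduce exactly the signs asserted in the lemma. I would sanity-check the cases $k=1$ and $k=2$ against the formula before wrapping up.
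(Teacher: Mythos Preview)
Your proof is correct and follows essentially the same approach as the paper: both arguments identify the coefficients $x_i$ as (signed) ratios of the maximal $(n-1)\times(n-1)$ minors of $A$ and then use the SSR hypothesis to conclude that all such minors share the sign $\epsilon_{n-1}$, reducing the problem to a parity check. The only cosmetic difference is packaging---the paper invokes Cramer's rule on the system $\sum_{i\neq k}x_i\boldsymbol{c^i}=\boldsymbol{c^k}$ to get $x_i$ as a ratio of determinants, whereas you directly exhibit the kernel vector $\boldsymbol{v}=((-1)^id_i)_i$ via the repeated-row Laplace trick and then divide through by $v_k$; the resulting formulas and sign computations coincide.
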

	\begin{proof}
		Let $A=[\boldsymbol{c}^1|\cdots |\boldsymbol{c}^n]$ be an SSR$(\epsilon)$ matrix of size $(n-1)\times n$. Since rank$(A)=n-1$, any column of $A$ can be written as a linear combination of its remaining $n-1$ columns. Fix $k\in\{1,\ldots,n\}$. Then there exist scalars $x_1, \ldots, x_{k-1}, x_{k+1}, \ldots, x_n$ such that
		\begin{equation*}\label{SSR_linear_comb_general}
			\boldsymbol{c}^k = x_1\boldsymbol{c}^1+\cdots+ x_{k-1}\boldsymbol{c}^{k-1}+ x_{k+1}\boldsymbol{c}^{k+1}+ \cdots+ x_n\boldsymbol{c}^n.
		\end{equation*}
		Let $i<k$. Solving the above system of linear equations using Cramer's rule for $x_i$, we obtain
		\begin{align}\label{SSR_xi_sign}
			x_i &= \frac{\det\left[\boldsymbol{c}^1| \cdots| \boldsymbol{c}^{i-1}| \boldsymbol{c}^k| \boldsymbol{c}^{i+1}| \cdots | \boldsymbol{c}^{k-1}| \boldsymbol{c}^{k+1}| \cdots| \boldsymbol{c}^n\right]}{\det\left[\boldsymbol{c}^1| \cdots | \boldsymbol{c}^{k-1}| \boldsymbol{c}^{k+1}| \cdots | \boldsymbol{c}^n\right]}. \nonumber \\ \nonumber \\
			\implies x_i &= \frac{(-1)^{k-i-1}\det\left[\boldsymbol{c}^1| \cdots| \boldsymbol{c}^{i-1}| \boldsymbol{c}^{i+1}| \cdots| \boldsymbol{c}^{k-1}| \boldsymbol{c}^k| \boldsymbol{c}^{k+1}| \cdots| \boldsymbol{c}^n\right]}{\det\left[\boldsymbol{c}^1| \cdots| \boldsymbol{c}^{k-1}| \boldsymbol{c}^{k+1}| \cdots| \boldsymbol{c}^n\right]}.
		\end{align}
		As the sign of both determinants in \eqref{SSR_xi_sign} is identical, sign($x_i$)$=(-1)^{k-i-1}$. Hence,
		\begin{align*}
			\text{sign}(x_i)=\begin{cases}
				(-1)^i, ~ &\text{if $k$ is odd}, \\
				(-1)^{i-1}, ~ &\text{if $k$ is even}.
			\end{cases}
		\end{align*}
		The argument is similar for $i>k$.
	\end{proof}
	
	The above lemma gives an essential relationship among the columns of an $(n-1)\times n$ SSR$(\epsilon)$ matrix: each such column can be expressed as an alternating linear combination of its remaining columns, beginning with positive coefficients of the adjacent columns.
	\begin{rem}
			The converse of the above lemma does not hold. This can be verified by considering the following matrix:
			\begin{align*}
				A = [\boldsymbol{c}^1| \boldsymbol{c}^2| \boldsymbol{c}^3| \boldsymbol{c}^4] = \begin{pmatrix}
					10 & 1 & 3 & 6 \\
					1 & 1 & 2 & 1 \\
					1 & 2 & 3 & 1
				\end{pmatrix}.
			\end{align*}
			Note that $\boldsymbol{c}^1 = \boldsymbol{c}^2 -\boldsymbol{c}^3 + 2\boldsymbol{c}^4$, $\boldsymbol{c}^2 = \boldsymbol{c}^1 +\boldsymbol{c}^3 - 2\boldsymbol{c}^4$, $\boldsymbol{c}^3 = -\boldsymbol{c}^1 +\boldsymbol{c}^2 + 2\boldsymbol{c}^4$, and $\boldsymbol{c}^4 = \frac{1}{2}\boldsymbol{c}^1 -\frac{1}{2}\boldsymbol{c}^2 + \frac{1}{2}\boldsymbol{c}^3$. Thus each column of $A$ satisfies the relation~\eqref{SSR_col_relation_form} as well the submatrix $[\boldsymbol{c}^2| \boldsymbol{c}^3| \boldsymbol{c}^4]$ is SSR, but the matrix $A$ is not SSR.
	\end{rem}
	
	Therefore, it is natural to ask: Given an SSR$(\epsilon)$ matrix $A\in\mathbb{R}^{n\times n}$, can we add a new column to $A$ using a linear combination of its $n$ columns, such that the resulting $n\times(n+1)$ matrix remains SSR$(\epsilon)$? More generally, one can ask: Given any $m\times n$ SSR$(\epsilon)$ matrix, can we embed a column (row) between any two successive columns (rows), or at any border, to form a new SSR matrix? To answer these questions, we first study the problem of line insertion at the borders of an SSR matrix, which constitutes the main result of this section. This requires the following well-known result by Karlin.
	
	\begin{theorem}[Karlin \cite{K68}] \label{Theorem_Karlin}
		Suppose $m,n\geq p\geq 1$ are integers and $A\in\mathbb{R}^{m\times n}$. Then $A$ is an SSR$_p(\epsilon)$ matrix with the sign pattern $\epsilon=(\epsilon_1,\ldots,\epsilon_p)$ if and only if all $k\times k$ contiguous minors of $A$ have sign $\epsilon_k$, for every $k\in[p]$.
	\end{theorem}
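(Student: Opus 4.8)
The forward implication is immediate: a $k\times k$ contiguous minor is in particular a $k\times k$ minor, so if $A$ is SSR$_p(\epsilon)$ then all its $k\times k$ contiguous minors have sign $\epsilon_k$ for every $k\in[p]$. The content lies in the converse, which I would organize as follows. Assume all $k\times k$ contiguous minors of $A$ have sign $\epsilon_k$ for $k\in[p]$, and prove by induction on $k$ that \emph{every} $k\times k$ minor of $A$ has sign $\epsilon_k$. Since the $1\times 1$ contiguous minors are exactly the entries of $A$, the case $k=1$ is clear. For the inductive step one fixes $k\geq 2$, assumes every minor of order $<k$ already has the prescribed sign, and then passes from contiguous $k\times k$ minors to arbitrary ones by a secondary induction on the \emph{dispersion} $\mathrm{disp}(I,J):=(i_k-i_1)+(j_k-j_1)-2(k-1)\geq 0$ of the index pair $I=\{i_1<\cdots<i_k\}$, $J=\{j_1<\cdots<j_k\}$ of the minor $\det A_{I\times J}$; this vanishes precisely when $A_{I\times J}$ is contiguous, which is the base case and is covered by hypothesis.

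Suppose then that $\mathrm{disp}(I,J)>0$. Transposing $A$ if necessary -- transposition preserves the class SSR$_p(\epsilon)$ and carries contiguous minors to contiguous minors -- we may assume the column set $J$ is not a block of consecutive integers, so it has a gap $j_{\ell+1}\geq j_\ell+2$. The aim is to express $\det A_{I\times J}$ in terms of minors that are either of order $<k$ (whose signs are known from the outer induction) or of order $k$ with strictly smaller dispersion (known from the secondary induction); the sign of $\det A_{I\times J}$ can then be read off. Such an expression is produced by a Sylvester/Desnanot--Jacobi type determinantal identity for a slightly enlarged submatrix of $A$ assembled from $I$, $J$, a column index interpolating the gap, and an auxiliary row index.

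The real obstacle -- and the reason this is a theorem rather than a one-line remark -- is getting an identity one can actually \emph{read signs off}. The obvious choice, Desnanot--Jacobi applied to $A_{I\times J}$ itself, yields $\det A_{I\times J}\cdot(\text{a minor of order }k-2)=(\text{product of two minors of order }k-1)-(\text{product of two minors of order }k-1)$, and even with all signs on the right known this is a difference of two quantities of the \emph{same} sign, hence of undetermined sign. One must instead choose the auxiliary configuration so that the governing identity is cancellation-free -- of the shape ``(minor of known sign)$\,\cdot\,\det A_{I\times J}=$ product of minors of smaller order or dispersion'' -- or, equivalently, argue through a chain of inequalities between ratios of neighbouring minors, exactly as (when $\epsilon_1=\epsilon_2=+1$) $a_1b_3-a_3b_1>0$ follows from $a_1b_2-a_2b_1>0$ and $a_2b_3-a_3b_2>0$ via $a_1/b_1>a_2/b_2>a_3/b_3$. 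Once the right reduction is in place, what remains is to track the powers of $-1$ carefully -- an argument essentially uniform in the sign pattern $\epsilon$ and in the choice of orientation (rows versus columns) -- and this is in substance Karlin's argument in \cite{K68}, which I would follow.
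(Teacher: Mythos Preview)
The paper does not prove this statement: it is quoted as a classical result of Karlin \cite{K68} and used throughout as a tool, with no argument supplied. There is thus no proof in the paper to compare your proposal against.

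For what it is worth, your sketch is an accurate outline of the classical argument: outer induction on the order $k$, inner induction on the dispersion of $(I,J)$, reduction via transposition to a column gap, and closure of that gap by a determinantal identity linking $\det A_{I\times J}$ to minors of smaller order or strictly smaller dispersion. You also correctly isolate the one genuine difficulty---that Desnanot--Jacobi applied to $A_{I\times J}$ itself yields a difference of like-signed products and hence no sign information---and you gesture at the two standard cures (a better-chosen Sylvester configuration, or the chain-of-ratios device whose $k=2$ instance you write out). You then defer to \cite{K68} for the actual identity; since the paper does exactly the same, that is entirely in keeping with how the result is treated here.
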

	
	 We begin with square SSR matrices.
	
	\begin{theorem}\label{Theorem_square_line_insertion}
		A line can be added to any border of a square SSR$(\epsilon)$ matrix such that the resulting matrix remains SSR$(\epsilon)$.
	\end{theorem}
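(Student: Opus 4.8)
I would start by reducing the problem twice over. First, by Karlin's theorem (Theorem~\ref{Theorem_Karlin}), a matrix is SSR$(\epsilon)$ as soon as all its contiguous minors have the signs dictated by $\epsilon$, so only finitely many sign conditions have to be arranged. Second, the four borders reduce to one: transposition preserves the SSR property together with its sign pattern, while left/right multiplication by an exchange matrix $P_m$ or $P_n$ preserves SSR and changes the sign pattern only by $\epsilon_k\mapsto(-1)^{k(k-1)/2}\epsilon_k$, an involution. Thus it suffices to show that a column can be appended on the \emph{right} of an arbitrary square SSR$(\epsilon)$ matrix so that the result is again SSR$(\epsilon)$; conjugating the resulting column by $P_n$ and $P_{n+1}$ then handles the left border, and composing with transposition handles the top and bottom.

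For the right border, write $A=[\boldsymbol{c^1}|\cdots|\boldsymbol{c^n}]\in\mathbb{R}^{n\times n}$, and --- in the spirit of the alternating linear combination in Lemma~\ref{SSR_col_relation_general} --- propose the new column
\[
\boldsymbol{c^{n+1}}(t):=\sum_{j=0}^{n-1}(-t)^{j}\boldsymbol{c^{n-j}}=\boldsymbol{c^n}-t\boldsymbol{c^{n-1}}+t^{2}\boldsymbol{c^{n-2}}-\cdots,\qquad t>0,
\]
and show that $[A|\boldsymbol{c^{n+1}}(t)]$ is SSR$(\epsilon)$ for all sufficiently small $t$. By Theorem~\ref{Theorem_Karlin} one only needs every contiguous minor of $[A|\boldsymbol{c^{n+1}}(t)]$ to have the right sign; those not using the last column are contiguous minors of $A$ and are already correct, so the only remaining conditions concern the $k\times k$ minors supported on columns $\{n-k+2,\dots,n+1\}$ and a block $I$ of $k$ consecutive rows, for $1\le k\le n$. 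For $k=1$ this holds once $t$ is small, since each entry of $\boldsymbol{c^{n+1}}(t)$ lies within $O(t)$ of the corresponding entry of $\boldsymbol{c^n}$, which is nonzero of sign $\epsilon_1$. For $k\ge 2$, expand such a minor multilinearly in its last column: every summand coming from $\boldsymbol{c^{n-j}}$ with $0\le j\le k-2$ vanishes because that column already appears among columns $n-k+2,\dots,n$; the $j=k-1$ summand equals $(-t)^{k-1}$ times the rows-$I$ determinant of the matrix with columns $\boldsymbol{c^{n-k+2}},\dots,\boldsymbol{c^n},\boldsymbol{c^{n-k+1}}$, and cycling $\boldsymbol{c^{n-k+1}}$ to the front multiplies this by $(-1)^{k-1}$, so the summand is $t^{k-1}\det(A_{I\times\{n-k+1,\dots,n\}})$, a nonzero number of sign $\epsilon_k$; the remaining summands are $O(t^{k})$. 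Hence each such minor has sign $\epsilon_k$ once $t$ lies below the finitely many thresholds coming from the pairs $(k,I)$, and then $[A|\boldsymbol{c^{n+1}}(t)]$ is SSR$(\epsilon)$. This also yields an explicit formula for the added line, as needed later for the algorithm.

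The step I expect to be the crux is the sign bookkeeping in the $j=k-1$ term: one must verify that the $k$-cycle sending $\boldsymbol{c^{n-k+1}}$ into first position contributes exactly $(-1)^{k-1}$, cancelling the $(-1)^{k-1}$ in $(-t)^{k-1}$, so that the leading term of the minor is a \emph{positive} multiple $t^{k-1}\det(A_{I\times\{n-k+1,\dots,n\}})$ of a contiguous minor of $A$ --- this is precisely where strictness of ``$A$ is SSR'' enters, and one must be careful that this leading term has the \emph{same} sign $\epsilon_k$ for every admissible row block $I$. A secondary point to handle cleanly is that the transposition and $P$-conjugation reductions transport the sign pattern back to $\epsilon$ exactly; this is routine once one records how $k\times k$ minors transform under row and column reversal.
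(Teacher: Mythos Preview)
Your argument is correct and follows the paper's approach closely: both reduce to contiguous minors via Karlin's theorem, reduce the four borders to one via transposition and exchange matrices, and build the new column as an alternating linear combination of the existing columns so that the dominant contribution to each contiguous $k\times k$ minor involving the new column is itself a contiguous $k\times k$ minor of $A$ with sign $\epsilon_k$. The only real difference is in the parameterization of the coefficients. The paper takes $\boldsymbol{c}=\sum_{i}(-1)^{i-1}y_i\boldsymbol{c^i}$ with $n$ independent scalars and chooses them iteratively, fixing $y_n>0$ arbitrarily and then, for $k=n-1,\dots,1$, taking $y_k$ larger than an explicit lower bound depending on $y_{k+1},\dots,y_n$; this yields concrete inequalities that feed directly into the construction algorithm in Section~\ref{Section_SSR_Construction}. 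You instead collapse everything to a single parameter by taking geometric coefficients $(-t)^{j}$ and letting $t\to 0^{+}$, which makes the ``leading term plus $O(t)$'' structure transparent and the write-up shorter, at the cost of a less explicit threshold for $t$. Both executions are valid; if you intend to use this theorem in the algorithm, you may want to unpack your single threshold into per-minor bounds, which effectively recovers the paper's iterative scheme.
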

	\begin{proof}
		Let $\epsilon=(\epsilon_1,\ldots,\epsilon_n)$ be a given sign pattern and let $A=[\boldsymbol{c}^1| \cdots| \boldsymbol{c}^n]\in\mathbb{R}^{n\times n}$ be an SSR$(\epsilon)$ matrix. Note that it suffices to show that one can add an extra column either to the left or right border of $A$ to obtain an $n\times(n+1)$ SSR$(\epsilon)$ matrix. The result for rows then follows by taking transposes. We first show how to add a column $\boldsymbol{c}\in\mathbb{R}^{n}$ to the left border of $A$ such that $\widehat{A}:=[\boldsymbol{c}| A]$ is SSR$(\epsilon)$. By Lemma~\ref{SSR_col_relation_general}, $\boldsymbol{c}$ must have the following form:
		\[\boldsymbol{c}=\displaystyle{\sum_{i=1}^{n}}(-1)^{i-1}y_i\boldsymbol{c}^i, \hspace{.1cm} \mathrm{where} \hspace{.15cm} y_i>0 \hspace{.1cm} \mathrm{for} \hspace{.15cm} i=1,\ldots,n.\]
		Given an integer $1\leq k\leq n$, note that the scalars $y_1,\ldots,y_{k-1}$ do not appear in the determinant of any $k\times k$ contiguous submatrices of $\widehat{A}$ that include its first column. At the same time, we get contributions from $y_k,\ldots,y_n$. Based on this observation, we outline the proof strategy: We begin by considering an $n\times n$ contiguous submatrix of $\widehat{A}$ containing its first column. From this submatrix, we first choose $y_n>0$ such that the sign of its determinant is $\epsilon_n$. Next, for any $1\leq k\leq n-1$ and for $k\times k$ contiguous submatrices of $\widehat{A}$ that include its first column, we show that we can select $y_{k}>0$ such that the sign of the determinant of these submatrices is $\epsilon_k$, assuming we already know the values of $y_n, y_{n-1}, \ldots, y_{k+1} > 0$. This completes our proof by Theorem~\ref{Theorem_Karlin}.
		
		To pick $y_n>0$, consider the following minor of $\widehat{A}$.
		\begin{align*}
			\det(\widehat{A}_{[n]\times[n]})&=\det\left[\boldsymbol{c}| \boldsymbol{c}^1| \cdots| \boldsymbol{c}^{n-1}\right] \\
			&=\displaystyle{\sum_{i=1}^{n}}(-1)^{i-1}y_i\det\left[\boldsymbol{c}^i| \boldsymbol{c}^1| \cdots| \boldsymbol{c}^{n-1}\right] \\
			&=(-1)^{n-1}y_n\det\left[\boldsymbol{c}^n| \boldsymbol{c}^1| \cdots| \boldsymbol{c}^{n-1}\right] \\
			&=y_n\det\left[\boldsymbol{c}^1| \cdots| \boldsymbol{c}^n\right].
		\end{align*}
		Hence, sign$(\det(\widehat{A}_{[n]\times[n]}))=\epsilon_{n}$ for every $y_n>0$. Choose any $y_n>0$; having defined $y_n,y_{n-1},\ldots,y_{k+1}>0$ for some $1\leq k\leq n-1$, we inductively define $y_k$ as follows: Let $I\subsetneq[n]$ be a contiguous set with $|I|=k$. Then for any real $y_k$,
		\begin{align*}
			\det(\widehat{A})_{I\times[k]}&=\det\left[\boldsymbol{c}| \boldsymbol{c}^1| \cdots| \boldsymbol{c}^{k-1}\right]_{I\times[k]} \\
			&=\det\left[\displaystyle{\sum_{i=k}^{n}}(-1)^{i-1}y_i\boldsymbol{c}^i| \boldsymbol{c}^1| \cdots| \boldsymbol{c}^{k-1}\right]_{I\times[k]} \\
			&= \det\left[(-1)^{k-1}y_{k}\boldsymbol{c}^{k}| \boldsymbol{c}^1| \cdots| \boldsymbol{c}^{k-1}\right]_{I\times[k]} +\det\left[\displaystyle{\sum_{i=k+1}^{n}}(-1)^{i-1}y_i\boldsymbol{c}^i| \boldsymbol{c}^1| \cdots| \boldsymbol{c}^{k-1}\right]_{I\times[k]} \\
			&= y_k\det\left[\boldsymbol{c}^1| \cdots| \boldsymbol{c}^{k-1}| (-1)^{2(k-1)} \boldsymbol{c}^{k}\right]_{I\times[k]} +\displaystyle{\sum_{i=k+1}^{n}(-1)^{i-1}}y_i\det\left[\boldsymbol{c}^i| \boldsymbol{c}^1| \cdots| \boldsymbol{c}^{k-1}\right]_{I\times[k]}  \\
			&= y_k\det\left[\boldsymbol{c}^1| \cdots| \boldsymbol{c}^{k}\right]_{I\times[k]} +\displaystyle{\sum_{i=k+1}^{n}(-1)^{i+k-2}}y_i\det\left[\boldsymbol{c}^1| \cdots| \boldsymbol{c}^{k-1}| \boldsymbol{c}^i\right]_{I\times[k]}.
		\end{align*}
		We require sign$(\det(\widehat{A})_{I\times[k]})=\epsilon_k$. Hence, we must ensure that $\epsilon_k\det(\widehat{A})_{I\times[k]} > 0$, which means:
		\begin{align}\label{y_k_lower_bound}
			\epsilon_k y_k\det\left[\boldsymbol{c}^1| \cdots| \boldsymbol{c}^{k}\right]_{I\times[k]} +\epsilon_k\displaystyle{\sum_{i=k+1}^{n}(-1)^{i+k}y_i} \det\left[\boldsymbol{c}^1| \cdots| \boldsymbol{c}^{k-1}| \boldsymbol{c}^i\right]_{I\times[k]} >0.
		\end{align}
		Since sign$\left(\det\left[\boldsymbol{c}^1| \cdots| \boldsymbol{c}^{k}\right]_{I\times[k]}\right)=\epsilon_k$, using~\eqref{y_k_lower_bound} we obtain the following lower bound for $y_k$:
		
		\begin{align}\label{y_k_lb}
			y_k>\frac{-\displaystyle{\sum_{i=k+1}^{n}(-1)^{i+k}y_i}\det\left[\boldsymbol{c}^1| \cdots| \boldsymbol{c}^{k-1}| \boldsymbol{c}^i\right]_{I\times[k]}} {\det\left[\boldsymbol{c}^1| \cdots| \boldsymbol{c}^{k}\right]_{I\times[k]}}.
		\end{align}
		
		Thus working over all such $k\times k$ contiguous minors, $\det(\widehat{A}_{I\times[k]})$ yields a finite set of lower bounds for $y_k$. Therefore, we can choose $y_k>0$ such that all $k\times k$ contiguous minors of $\widehat{A}$ containing the first column, and hence all the $k\times k$ minors, have the sign $\epsilon_k$. 
		
		Finally, to show that we can add a column on the right border of $A$, begin with the matrix $AP_n$. Using the Cauchy--Binet formula and the fact that $\det P_{k} = (-1)^{\left\lfloor\frac{k}{2}\right\rfloor}$, it follows that $AP_n$ is an SSR$(\epsilon^{\prime})$ matrix, where $\epsilon^{\prime}_i = (-1)^{\left\lfloor\frac{i}{2}\right\rfloor}\epsilon_i$ for $i\in[n]$. Next, add a column to its ultimate left to obtain an $n\times(n+1)$ SSR$(\epsilon^{\prime})$ matrix. Then multiplying the resulting matrix with $P_{n+1}$ on the right gives the desired matrix. This concludes our proof.
	\end{proof}
	
	\begin{rem}\label{Remark_line_insertion_m<=n}
		Note that Theorem~\ref{Theorem_square_line_insertion} also addresses the insertion of a column at the left (or right) border of an $m\times n$ SSR$(\epsilon)$ matrix $A$ with $m\leq n$, resulting in an SSR$(\epsilon)$ matrix. This follows from the fact that a new column can be expressed as a linear combination of the first $m$ columns of $A$. Consequently, when considering all contiguous square submatrices of the resulting matrix that include the first column, no column of $A$ beyond the $(m-1)^{th}$ column will contribute to their determinants. Similarly, we can add a column at the right border.
	\end{rem}
	The above remark raises the following question: For $m>n$, can we complete every $m\times n$ SSR$(\epsilon)$ matrix to an $m\times(n+1)$ SSR$(\epsilon^{\prime})$ matrix? The following theorem provides a positive answer.
	
	\begin{theorem}\label{Theorem_m>n_column_insertion}
		Given integers $m>n\geq1$, suppose $A$ is an $m\times n$ SSR$(\epsilon)$ matrix. Also set $\epsilon^\prime_i= \epsilon_i$ for $1\leq i \leq n$ and let $\epsilon^\prime_{n+1}$ be either $1$ or $-1$. Then we can append a new column to either border of $A$ to make it an $m\times(n+1)$ SSR$(\epsilon^{\prime})$ matrix. 
	\end{theorem}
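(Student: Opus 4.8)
The plan is to follow the strategy of Theorem~\ref{Theorem_square_line_insertion}, reducing first to appending a column at the \emph{left} border; the right border then follows just as in the exchange-matrix argument in the last paragraph of that proof (conjugating by $P_n$ and $P_{n+1}$ and using $\det P_k=(-1)^{\floor{k/2}}$), the only new point being that the freely chosen sign $\epsilon'_{n+1}$ of the new top-size minors is carried to $(-1)^{\floor{(n+1)/2}}\epsilon'_{n+1}$, which is still a free sign. So write $A=[\boldsymbol{c^1}|\cdots|\boldsymbol{c^n}]\in\mathbb{R}^{m\times n}$ with $m>n$ and look for $\boldsymbol c\in\mathbb{R}^m$ making $\widehat A:=[\boldsymbol c|A]$ SSR$(\epsilon')$. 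Since $m>n$, the matrix $\widehat A$ has minors of each size $k\in[n+1]$, and by Theorem~\ref{Theorem_Karlin} it is enough to make every contiguous $k\times k$ minor of $\widehat A$ have sign $\epsilon'_k$. The contiguous minors avoiding the first column are contiguous minors of $A$, hence already correctly signed, so only the contiguous minors on column set $\{1,\ldots,k\}$ need attention.

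The genuinely new size is $k=n+1$. I would look for $\boldsymbol c$ in the form
\begin{equation*}
	\boldsymbol c=\sum_{i=1}^{n}(-1)^{i-1}y_i\,\boldsymbol{c^i}+s\,\boldsymbol w,\qquad y_1,\ldots,y_n>0,\quad s\neq 0,
\end{equation*}
the alternating-positive part being the form dictated by Lemma~\ref{SSR_col_relation_general}, where $\boldsymbol w\in\mathbb{R}^m$ has entries $w_j=t^j$ for a large parameter $t>0$. For a contiguous $(n+1)$-element row set $I$, the vector $\sum_i(-1)^{i-1}y_i\boldsymbol{c^i}$ restricted to $I$ lies in the column span of $A[I]$, so it contributes nothing to the $(n+1)\times(n+1)$ minor on rows $I$; thus that minor equals $s\det\left[\boldsymbol w|\boldsymbol{c^1}|\cdots|\boldsymbol{c^n}\right][I]$. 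Expanding this determinant along its first column and using that every $n\times n$ minor of $A$ has sign $\epsilon_n$, one finds it equals $\epsilon_n\, t^{a}\, q_I(t)$, where $a=\min I$ and $q_I$ is a polynomial of degree exactly $n$ whose leading coefficient is $(-1)^n$ times a positive number. Hence, for $t$ large enough uniformly over the finitely many contiguous $I$, all these minors share the single sign $(-1)^n\epsilon_n$ times the sign of $s$; I then fix the sign of $s$ so that this equals $\epsilon'_{n+1}$. This is where I expect the real work to be: a new column built only from the $\boldsymbol{c^i}$ makes every top-size minor vanish, while the naively alternating vector makes the sign of the top-size minor depend on the parity of $\min I$, so one must introduce the rapidly growing direction $\boldsymbol w$, which both escapes the column span of $A$ and, through the leading term of its cofactor expansion, gives all top-size minors a row-independent and prescribable sign.

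With $t$ (hence $\boldsymbol w$) and $s$ now fixed, the sizes $k\le n$ are treated exactly as in Theorem~\ref{Theorem_square_line_insertion}. For a contiguous row set $I$ with $|I|=k$, expanding the minor of $\widehat A$ on rows $I$ and columns $\{1,\ldots,k\}$ by linearity in the first column annihilates the coefficients $y_1,\ldots,y_{k-1}$ and leaves $\epsilon_k\, y_k\, D^{(I)}_k$ plus a quantity depending only on the already-fixed data $y_{k+1},\ldots,y_n,s,\boldsymbol w$, where $D^{(I)}_k>0$ is the modulus of a $k\times k$ minor of $A$. Choosing $y_n,y_{n-1},\ldots,y_1$ in this order, each large enough to dominate the finitely many already-determined error terms, forces every such minor to have sign $\epsilon_k$. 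By Theorem~\ref{Theorem_Karlin}, $\widehat A$ is then SSR$(\epsilon')$, and the exchange-matrix argument delivers the right-border statement, completing the proof.
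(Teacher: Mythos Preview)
Your argument is correct, and it takes a genuinely different route from the paper's. The paper first chooses $\boldsymbol c=\sum_{i=1}^n(-1)^{i-1}y_i\boldsymbol{c^i}$ exactly as in Theorem~\ref{Theorem_square_line_insertion} so that $\widehat A=[\boldsymbol c|A]$ is already $\mathrm{SSR}_n(\epsilon)$ with all $(n{+}1)\times(n{+}1)$ minors equal to zero, and then runs a second, inductive perturbation step: it adds small numbers $\delta_1,\ldots,\delta_{m-n}$ to the first $m-n$ entries of $\boldsymbol c$ one at a time, at each stage bounding $|\delta_k|$ by a ratio $\lambda_k/\Lambda_k$ of contiguous-minor moduli so that the existing sign constraints survive while one more contiguous $(n{+}1)\times(n{+}1)$ minor becomes nonzero with the prescribed sign (the sign of $\delta_k$ selects $\epsilon'_{n+1}=\pm\epsilon_n$). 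By contrast, you introduce a single auxiliary direction $\boldsymbol w=(t,t^2,\ldots,t^m)^T$ escaping the column span of $A$; the cofactor expansion along the first column then makes every contiguous top-size minor a polynomial in $t$ with common leading sign $(-1)^n\epsilon_n$, so one large $t$ and one choice of $\operatorname{sign}(s)$ settle all $(n{+}1)\times(n{+}1)$ minors simultaneously, after which the $y_k$ are chosen large in the usual descending order. Your approach is conceptually cleaner (no entry-by-entry induction, no odd/even case split on the row position of the perturbed entry), while the paper's approach is more directly algorithmic: each $\delta_k$ lies in an explicit interval $(-\lambda_k/\Lambda_k,\lambda_k/\Lambda_k)$, which is exactly what is implemented in Algorithm~2 and the Python routine \texttt{add\_perturbation}. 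One small remark: your appeal to Lemma~\ref{SSR_col_relation_general} is only heuristic motivation for the alternating form of the $y_i$-part, not a logical step (the lemma is a necessary condition on a column of an already-SSR matrix); what actually matters is the cancellation $(-1)^{k-1}\cdot(-1)^{k-1}=1$ that makes the $y_k$-coefficient of each $k\times k$ minor equal to $\det(A_{I\times[k]})$, and you use this correctly.
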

	\begin{proof}
		Let $m>n$ and $A=[\boldsymbol{c}^1| \cdots| \boldsymbol{c}^n]$ be an $m\times n$ SSR$(\epsilon)$ matrix. Define $\widehat{A}:=[\boldsymbol{c}|A]\in\mathbb{R}^{m\times(n+1)}$, where $\boldsymbol{c} = (c_{1},\ldots,c_{m})^T =\displaystyle{\sum_{i=1}^{n}(-1)^{i-1}y_i \boldsymbol{c}^i}$ and $y_i>0$ for $i=1,\ldots,n$. Now repeating the proof of Theorem~\ref{Theorem_square_line_insertion}, we obtain $y_1,\ldots,y_n>0$ such that $\widehat{A}$ is SSR$_n(\epsilon)$. Since the column vector $\boldsymbol{c}$ is a linear combination of the remaining $n$ columns of $\widehat{A}$, all $(n+1)\times(n+1)$ minors of $\widehat{A}$ are zero. Our next goal is to perturb the first $m-n$ entries of $\boldsymbol{c}$ such that all $(n+1)\times(n+1)$ minors of $\widehat{A}$ are non-zero and have the same sign. For $1\leq s\leq m-n$, define the $m\times(n+1)$ matrix $A_s:=[\boldsymbol{e}^s| A]$, where $\boldsymbol{e}^s\in\mathbb{R}^m$ is the unit vector whose $s^{th}$ entry is 1 and the rest are zero.
		
		We now show how to perturb the entries of $\boldsymbol{c}$ to obtain an SSR matrix. The $(n+1) \times (n+1)$ minors can have sign either $\epsilon_n$ or $-\epsilon_n$. We write out the proof for $\epsilon_{n+1} = \epsilon_n$, and indicate at the very end what to do in the $\epsilon_{n+1}=-\epsilon_n$ case.
		
		If $\epsilon_{n+1} = \epsilon_n$, first perturb the entry $c_1$ of $\boldsymbol{c}$ such that $\widehat{A}$ remains SSR$_n(\epsilon)$, while its $[n+1]\times[n+1]$ minor becomes non-zero with the sign $\epsilon_n$. To proceed, suppose the moduli of all non-zero contiguous minors of $\widehat{A}$ and $A_1$ containing their first row and first column lie in the interval $[\lambda_1,\Lambda_1]$. Choose $0<\delta_1<\frac{\lambda_1}{\Lambda_1}$. Define
		\begin{align*}
			\widehat{A}_{\delta_1}:= [\boldsymbol{c}^{\delta_1}| A], ~ \text{where} ~ (\boldsymbol{c}^{\delta_1})_i:=\begin{cases}
				c_{i} + \delta_1, ~ &\text{if} ~ i=1,\\
				c_{i}, ~ &\text{otherwise}. 
			\end{cases}
		\end{align*}
		
		First, we show $\widehat{A}_{\delta_1}$ is SSR$_n(\epsilon)$. It suffices to show that all $r\times r$ leading principal minors of $\widehat{A}_{\delta_1}$ have sign $\epsilon_r$ for $r=1,\ldots,n$. Let $r\in[n]$ and consider
		\begin{align*}
			\det(\widehat{A}_{\delta_1})_{[r]\times[r]} = \det(\widehat{A})_{[r]\times[r]} + \delta_1\det(\widehat{A})_{[r]\smallsetminus\{1\}\times[r]\smallsetminus\{1\}},
		\end{align*}
where if $r=1$ then we take the determinant of the empty matrix to be $1$. Note that $\det(\widehat{A})_{[r]\smallsetminus\{1\}\times[r]\smallsetminus\{1\}} =\det(A_1)_{[r]\times[r]}$. This shows $\epsilon_r\det(\widehat{A}_{\delta_1})_{[r]\times[r]} > 0$ if $\epsilon_{r-1} = \epsilon_r$. If not, then $\epsilon_r\det(\widehat{A}_{\delta_1})_{[r]\times[r]} \geq  \lambda_1 + \delta_1(-\Lambda_1) > 0$. Thus $\widehat{A}_{\delta_1}$ is SSR$_n(\epsilon)$. Next, we show that the $[n+1]\times[n+1]$ minor of $\widehat{A}_{\delta_1}$ has the same sign as $\epsilon_n$:
		\begin{align*}
			\det(\widehat{A}_{\delta_1})_{[n+1]\times[n+1]} &= \det(\widehat{A})_{[n+1]\times[n+1]} + \delta_1\det(\widehat{A})_{[n+1]\smallsetminus\{1\}\times[n+1]\smallsetminus\{1\}} \\
			&= \delta_1\det(\widehat{A})_{[n+1]\smallsetminus\{1\}\times[n+1]\smallsetminus\{1\}}.
		\end{align*}
		
		Now, we inductively show that we can always select $\delta_k>0$ for all $k = 2,\ldots, m-n$. Assume we have already perturbed the first $k-1$ entries of the first column of $\widehat{A}$ by $\delta_1,\ldots,\delta_{k-1}$, such that it remains SSR$_n(\epsilon)$ and all its contiguous minors of size $(n+1)\times(n+1)$ containing at least one of the first $k-1$ rows have the sign $\epsilon_n$.
		
		Let the moduli of all non-zero contiguous minors of $\widehat{A}_{\delta_{k-1}}$ and $A_k$ containing their $k^{th}$ row and first column lie in the interval $[\lambda_k,\Lambda_k]$, and choose $0<\delta_k<\frac{\lambda_k}{\Lambda_k}$. Define $\widehat{A}_{\delta_k}$ as follows:
		\begin{align*}
			\widehat{A}_{\delta_k}:= [\boldsymbol{c}^{\delta_k}| A], ~ \text{where} ~ (\boldsymbol{c}^{\delta_k})_i:=\begin{cases}
				(\boldsymbol{c}^{\delta_{k-1}})_i + \delta_k, ~ &\text{for} ~~ i=k, \\
				(\boldsymbol{c}^{\delta_{k-1}})_i , ~ &\text{otherwise}.
			\end{cases}
		\end{align*}
		By the induction hypothesis, all $r\times r$ minors of $\widehat{A}_{\delta_k}$ without the $(k,1)$ entry have sign $\epsilon_r$, for each $r\in[n]$. Fix $r\in[n]$ and consider any $r\times r$ contiguous minor of $\widehat{A}_{\delta_k}$ containing its $k^{th}$ row and first column. Let the contiguous rows be indexed by $I=\{i_1,\ldots,i_r\}$, where $0<i_1<\cdots<i_r$, $i_1\in[k-r+1, k]$, and $i_l=k$ for some $l\in[r]$. Then we have the following two choices for the determinant of $(\widehat{A}_{\delta_k})_{I\times[r]}$:
		\begin{align*}
			\det(\widehat{A}_{\delta_k})_{I\times[r]} = \begin{cases}
				\det(\widehat{A}_{\delta_{k-1}})_{I\times[r]} + \delta_k\det(\widehat{A}_{\delta_{k-1}})_{I\smallsetminus\{i_l\}\times[r]\smallsetminus\{1\}}, ~ &\text{when $l$ is odd}, \\
				\det(\widehat{A}_{\delta_{k-1}})_{I\times[r]} - \delta_k\det(\widehat{A}_{\delta_{k-1}})_{I\smallsetminus\{i_l\}\times[r]\smallsetminus\{1\}}, ~ &\text{when $l$ is even}.
			\end{cases}
		\end{align*}
		
		Note that
		\begin{align*}
			\det(\widehat{A}_{\delta_{k-1}})_{I\smallsetminus\{i_l\}\times[r]\smallsetminus\{1\}} = \begin{cases}
				\det (A_k)_{I\times[r]}, ~~&\text{when $l$ is odd}, \\
				-\det (A_k)_{I\times[r]}, ~~&\text{when $l$ is even}.
			\end{cases}
		\end{align*} 
		
		If $l$ is odd, then an argument similar to the previous case gives us  $\det(\widehat{A}_{\delta_k})_{I\times[r]} = \epsilon_r$. For even $l$, we have $\epsilon_r\det(\widehat{A}_{\delta_k})_{I\times[r]} >  0$ if $\epsilon_{r-1} \neq \epsilon_r$. Otherwise $\epsilon_{r-1} = \epsilon_r$, in which case $\epsilon_r\det(\widehat{A}_{\delta_k})_{I\times[r]}\geq \lambda_k-\delta_k\Lambda_k>0$. Thus $\widehat{A}_{\delta_k}$ is SSR$_n(\epsilon)$. It only remains to show that all $(n+1)\times(n+1)$ minors of $\widehat{A}_{\delta_k}$ containing the $k^{th}$ row have the sign $\epsilon_n$. Consider the following contiguous minor:
		\begin{align*}
			\epsilon_n\det(\widehat{A}_{\delta_k})_{[k,k+n]\times[n+1]} &= \epsilon_n\det(\widehat{A}_{\delta_{k-1}})_{[k,k+n]\times[n+1]} + \epsilon_n\delta_k\det(\widehat{A}_{\delta_{k-1}})_{[k,k+n]\smallsetminus\{k\}\times[n+1]\smallsetminus\{1\}} \\
			&= \epsilon_n\delta_k\det(\widehat{A}_{\delta_{k-1}})_{[k+1,k+n]\times[n+1]\smallsetminus\{1\}} \\
			&> 0.
		\end{align*}
		Next, consider the remaining $(n+1)\times(n+1)$ contiguous minors of $\widehat{A}_{\delta_k}$ whose rows are indexed by $I = \{i_1,\ldots,i_{n+1}\}$, where $0<i_1<\cdots<i_{n+1}$, $i_1\in[k-n, k-1]$, and $i_l=k$ for some $l\in[2,n+1]$. By a similar argument as above, we have
		\begin{align*}
			\det(\widehat{A}_{\delta_k})_{I\times[n+1]} =\begin{cases}
			    \det(\widehat{A}_{\delta_{k-1}})_{I\times[n+1]} + \delta_k\det(\widehat{A}_{\delta_{k-1}})_{I\smallsetminus\{i_l\}\times[n+1]\smallsetminus\{1\}}, ~ &\text{if $l$ is odd},\\
			    \det(\widehat{A}_{\delta_{k-1}})_{I\times[n+1]} - \delta_k\det(\widehat{A}_{\delta_{k-1}})_{I\smallsetminus\{i_l\}\times[n+1]\smallsetminus\{1\}}, ~ &\text{if $l$ is even}.
			\end{cases}
		\end{align*}	
		Since sign$(\det(\widehat{A}_{\delta_{k-1}})_{I\times[n+1]}) = \epsilon_n$ by the induction hypothesis and $\delta_k>0$, the sign of $\det(\widehat{A}_{\delta_k})_{I\times[n+1]}$ is $\epsilon_n$ for $l$ odd. For even $l$, we have
		\begin{align*}
			\epsilon_n\det(\widehat{A}_{\delta_k})_{I\times[n+1]} &=	\epsilon_n\det(\widehat{A}_{\delta_{k-1}})_{I\times[n+1]} - \epsilon_n\delta_k\det(\widehat{A}_{\delta_{k-1}})_{I\smallsetminus\{i_l\}\times[n+1]\smallsetminus\{1\}} \\
			&\geq \lambda_{k} - \delta_{k}\Lambda_{k} \\
			&> 0.
		\end{align*}
		
		Proceeding inductively, we obtain an $m\times(n+1)$ completion of $A$ that is SSR$_{n+1}$. Note that $(n+1)\times(n+1)$ minors can instead be made to be of sign $-\epsilon_n$ by choosing $-\frac{\lambda_i}{\Lambda_i} < \delta_i < 0$ for $i=1,\ldots, m-n$. This concludes our proof.
	\end{proof}
	
	Using Remark~\ref{Remark_line_insertion_m<=n} and Theorem~\ref{Theorem_m>n_column_insertion}, we now prove the main theorem of this section.
	
	\begin{proof}[Proof of Theorem~\ref{A}]  \label{Proof_A}
		Assume that $A\in\mathbb{R}^{m\times n}$ is an SSR$(\epsilon)$ matrix with $m\leq n$. To extend $A$ into an SSR matrix by adding a column to its left border, use Remark~\ref{Remark_line_insertion_m<=n}. To add a row at the top border, insert a column to the left border of $A^T$ by Theorem~\ref{Theorem_m>n_column_insertion}; then transposing the resulting matrix gives the required matrix. 
		
		To include a column at the right border of $A$, first add a column to the left border of $AP_n$ using Remark~\ref{Remark_line_insertion_m<=n}. Note that $AP_n$ is an SSR$(\epsilon^{\prime})$ matrix, where $\epsilon^{\prime}_i = (-1)^{\left\lfloor\frac{i}{2}\right\rfloor}\epsilon_i$ for  $i\in[m]$. Then multiplying the resulting matrix with $P_{n+1}$ on the right yields the desired matrix.
		
		 To add a row at the bottom border of $A$, insert a column to the left border of $A^TP_n$ using Theorem~\ref{Theorem_m>n_column_insertion}. Thereafter, multiply the resulting matrix by $P_{n+1}$ and transpose it. Likewise, one can embed a row/column into $A$ at any border when $m>n$.
	\end{proof}
	
	\begin{rem}\label{Remark_SSR_p_border}
		If $p<\min\{m,n\}$, an $m\times n$ SSR$_p(\epsilon)$ matrix $A$ can be extended to another SSR$_p(\epsilon)$ matrix  by adding a line to any border. It is sufficient to show that we can add a new column to the left border of $A$ so that it remains SSR$_p(\epsilon)$. Note that the columns of $A$ after the $p^{th}$ column do not contribute to its contiguous minors that include the first column and are of size $\leq p$. Thus, we can assume without loss of generality that $A$ is an $m\times p$ SSR$(\epsilon)$ matrix. Now, repeat the proof similar to the first half of Theorem~\ref{Theorem_m>n_column_insertion} to obtain an $m\times(p+1)$ SSR$_p(\epsilon)$ matrix whose $(p+1)\times(p+1)$ minors are zero. Hence, one can add a new column/row to the border of an $m\times n$ SSR$_p(\epsilon)$ matrix such that the resulting matrix is SSR$_p(\epsilon)$.
	\end{rem}
	
	\section{Inserting lines between consecutive rows or columns}\label{Section_line_insertion_middle}
	
	This section is devoted to proving that we can place a line between any two given successive columns or rows of an SSR matrix, resulting in another SSR matrix. This result is based on the following:
	\begin{itemize}
		\item[(i)] We can always insert a line in the middle of a square SSR matrix of even order such that the resulting matrix is SSR.
		\item[(ii)] By Theorem~\ref{A}, an $m\times n$ SSR matrix $A$ can be completed to a square SSR matrix $\widehat{A}$ by adding lines to the borders of $A$ such that the desired position of line insertion is at the middle of $\widehat{A}$.
		\item[(iii)]  A submatrix of an SSR matrix is SSR.
	\end{itemize} 
Thus it suffices to show the first point above. We now do so.
	
	\begin{theorem} \label{Theorem_line_insertion_middle}
		Let $A$ be a square SSR$(\epsilon)$ matrix of even order. Then, one can always insert a line in the middle of $A$, resulting in another SSR$(\epsilon)$ matrix.
	\end{theorem}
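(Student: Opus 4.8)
The plan is to reduce to inserting a column and to realize the new column as a carefully weighted alternating combination of the columns of $A$. Write $n=2\ell$ and $A=[\boldsymbol{c^1}|\cdots|\boldsymbol{c^n}]$; by transposing, it suffices to insert a column between $\boldsymbol{c^\ell}$ and $\boldsymbol{c^{\ell+1}}$, that is, to produce an SSR$(\epsilon)$ matrix $\widehat{A}=[\boldsymbol{c^1}|\cdots|\boldsymbol{c^\ell}|\boldsymbol{c}|\boldsymbol{c^{\ell+1}}|\cdots|\boldsymbol{c^n}]\in\mathbb{R}^{n\times(n+1)}$. Since $\widehat{A}$ has exactly one more column than rows, Lemma~\ref{SSR_col_relation_general}, applied to its $(\ell+1)$-th column, shows that for $\widehat{A}$ to be SSR one must have $\boldsymbol{c}=\sum_{j=1}^{n}x_j\boldsymbol{c^j}$ where $x_j$ has sign $(-1)^{\ell-j}$ for $j\le\ell$ and sign $(-1)^{j-\ell-1}$ for $j>\ell$; in particular $x_\ell,x_{\ell+1}>0$, and the coefficients alternate in sign as $j$ recedes from the central pair $\{\ell,\ell+1\}$. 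So I would take $\boldsymbol{c}$ of exactly this shape, write $|x_j|=y_j>0$, and aim to choose the moduli $y_j$.

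By Theorem~\ref{Theorem_Karlin} it is enough to control the contiguous minors of $\widehat{A}$. A contiguous minor whose column index set avoids the inserted column is, after shifting column indices, a contiguous minor of $A$, hence already has the correct sign; so only a contiguous minor $\det(\widehat{A})_{I\times J}$ with $|I|=|J|=k$ and the inserted column inside $J=\{a,\dots,b\}$ needs attention. Expanding along the inserted column via $\boldsymbol{c}=\sum_j x_j\boldsymbol{c^j}$ and multilinearity, the terms coming from $\boldsymbol{c^j}$ with $j$ in the underlying block $\{a,\dots,b-1\}$ of columns of $A$ vanish (repeated column), while each surviving term --- $j$ lying to the left of, or to the right of, that block --- equals $\pm y_j$ times a genuine $k\times k$ minor of $A$ obtained by sliding $\boldsymbol{c^j}$ back into increasing position, and that minor has sign $\epsilon_k$ because in an SSR matrix \emph{all} minors of a given size, not merely the contiguous ones, share a sign. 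A short permutation count then reveals the crucial point: among the surviving indices, the one nearest to the central pair on each side contributes to $\epsilon_k\det(\widehat{A})_{I\times J}$ with a \emph{positive} weight --- on the left this index is $a-1$ and the relevant parity is that of $a+(a-1)$, which is odd, while on the right it is $b$, with $b+b$ even; and if the block lies entirely on one side of the insertion point, the nearest surviving column is $\boldsymbol{c^\ell}$ or $\boldsymbol{c^{\ell+1}}$ itself, whose coefficient is positive by construction.

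With this sign information secured, I would let the moduli decay geometrically with distance from the insertion point, $y_{\ell-d}=y_{\ell+1+d}=\eta^{\,d}$ for a single small parameter $\eta>0$. After dividing by the appropriate power of $\eta$, any contiguous minor of $\widehat{A}$ through the inserted column becomes a strictly positive quantity --- the contribution of the nearest surviving column(s), namely $\epsilon_k$ times a positive number, consisting of two like-signed pieces when two equidistant nearest columns occur --- plus an $O(\eta)$ remainder. Since there are only finitely many such minors, and since each entry of $\boldsymbol{c}$ is dominated by $c^\ell_i+c^{\ell+1}_i$ (which has sign $\epsilon_1$), choosing $\eta$ small enough forces every contiguous minor of $\widehat{A}$ to have the sign dictated by $\epsilon$, so $\widehat{A}$ is SSR$(\epsilon)$; transposing recovers the row-insertion case.

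I expect the main obstacle to be the permutation/parity bookkeeping in the middle step: one must verify that the alternating sign pattern of the $x_j$ forced by Lemma~\ref{SSR_col_relation_general} combines with the column-reordering signs so that the dominant term of every minor through $\boldsymbol{c}$ is $+\epsilon_k$ and never $-\epsilon_k$ (and that two equidistant dominant terms reinforce rather than cancel). Once that is in hand, only a routine ``make the perturbation small'' estimate remains, and the evenness of the order enters merely to make the phrase ``in the middle'' unambiguous, i.e.\ to place the gap between columns $\ell$ and $\ell+1$.
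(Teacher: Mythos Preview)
Your proposal is correct and follows essentially the same approach as the paper: the same symmetric alternating ansatz $\boldsymbol{c}=\sum_{i}(-1)^{i-1}y_i(\boldsymbol{c^{\ell-i+1}}+\boldsymbol{c^{\ell+i}})$, the same reduction to contiguous minors via Karlin's theorem, and the same key sign observation that the ``nearest surviving'' column(s) on either side of the block always contribute with sign $+\epsilon_k$. The only cosmetic difference is that the paper fixes $y_n$ arbitrarily and then chooses $y_{n-1},\ldots,y_1$ inductively, each large enough to dominate the already-determined tail, whereas you package the same hierarchy into a single geometric decay $y_{i}=\eta^{\,i-1}$ and send $\eta\to0$; these are equivalent parametrizations of the same argument.
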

	\begin{proof}
    By considering $A^T$ if necessary, it suffices to insert a column in $A$. Suppose $A=[\boldsymbol{c}^1| \cdots| \boldsymbol{c}^{2n}]\in\mathbb{R}^{2n\times 2n}$ is an SSR$(\epsilon)$ matrix. We begin by showing that a column can be added in the middle of $A$. Define $\widehat{A}:=[\boldsymbol{c}^1|\cdots| \boldsymbol{c}^n| \boldsymbol{c}| \boldsymbol{c}^{n+1}| \cdots| \boldsymbol{c}^{2n}]\in\mathbb{R}^{2n\times(2n+1)}$, where $\boldsymbol{c}\in\mathbb{R}^{2n}$ is to be obtained such that $\widehat{A}$ is SSR$(\epsilon)$. By Lemma~\ref{SSR_col_relation_general}, it suffices to produce $\boldsymbol{c}$ of the form:
		\begin{equation}\label{SSR_x_form}
			\boldsymbol{c} = \displaystyle{\sum_{i=1}^{n}} (-1)^{i-1}y_i \left(\boldsymbol{c}^{n-i+1} + \boldsymbol{c}^{n+i}\right), \hspace{.1cm} \text{for some choice of} \hspace{.1cm} y_1, \ldots, y_n >0.
		\end{equation}
		We aim to show that $y_1, \ldots, y_n >0$ can be chosen such that $\widehat{A}$ is an SSR$(\epsilon)$ matrix. To show this, it is enough to examine all contiguous minors of $\widehat{A}$ that include subcolumns of $\boldsymbol{c}$. Throughout this proof, we denote an arbitrary square contiguous submatrix of $\widehat{A}$ that contains a subcolumn of $\boldsymbol{c}$ as $\widetilde{A}$. Set:
		\begin{align*}
			l(\widetilde{A}) &:= \text{number of columns to the left of  a subcolumn of} ~ \boldsymbol{c} ~ \text{in $\widetilde{A}$}, \\
			r(\widetilde{A}) &:= \text{number of columns to the right of a subcolumn of} ~ \boldsymbol{c} ~ \text{in $\widetilde{A}$}, ~ \text{and} \\
			m(\widetilde{A}) &:= \min\{l(\widetilde{A}), r(\widetilde{A})\}.
		\end{align*}
		Note that the dimension of $\widetilde{A}$ is $d = l(\widetilde{A})+r(\widetilde{A}) +1\geq 2(m(\widetilde{A}))+1$, and $\widetilde{A}$ is of the form
		\begin{align}\label{A_tilde_general_form}
			\widetilde{A} = \left[\boldsymbol{c}^{n-(l(\widetilde{A})-1)}|\cdots| \boldsymbol{c}^n| \boldsymbol{c}| \boldsymbol{c}^{n+1}| \cdots| \boldsymbol{c}^{n+r(\widetilde{A})}\right]_{I\times[d]},
		\end{align}
		where $I\subseteq[2n]$ is a contiguous index set with $|I| = d$. To proceed, we draw the following conclusions:
		\begin{itemize}
			\item[(i)] $m(\widetilde{A})\in [0, n-1]$. If $m(\widetilde{A})\geq n$, then $\widetilde{A}$ must have at least $2n+1$ columns, but it does not have more than $2n$ rows. Hence, $\widetilde{A}$ is not square.
			
			\item[(ii)] For $0\leq k\leq n-1$, let $m(\widetilde{A}) = k$. In this case, the scalars $y_1,\ldots,y_{k}>0$ do not appear in the determinant of $\widetilde{A}$, while the coefficients $y_{k+1},\ldots,y_n>0$ do contribute. This follows from the multilinearity of the determinant.
		\end{itemize}
		The proof strategy is as follows: We begin by considering all submatrices $\widetilde{A}\in\mathbb{R}^{d\times d}$ satisfying $m(\widetilde{A}) = n-1$ to determine $y_n>0$ such that the sign of the determinant of $\widetilde{A}$ is $\epsilon_d$. Next, assume the values of $y_n, y_{n-1}, \ldots, y_{n-k}>0$ are known for $0\leq k\leq n-2$, ensuring the sign of the determinant of each submatrix $\widetilde{A}$ of $\widehat{A}$, with $m(\widetilde{A})= n-1, n-2, \ldots, n-k-1$, is preserved. We then find a suitable $y_{n-k-1}>0$ that does not change the sign of the determinant of any submatrix $\widetilde{A}$ of $\widehat{A}$ with $m(\widetilde{A}) = n-k-2$. 
		
		We start by letting $m(\widetilde{A})= n-1$. To obtain $y_n$, we split the proof into several cases. \smallskip
		
		\textbf{Case I.} $m(\widetilde{A}) = l(\widetilde{A}) = n-1 < r(\widetilde{A})$. \smallskip
		
		In this case, $\widetilde{A} = \left[\boldsymbol{c}^2| \cdots| \boldsymbol{c}^n| \boldsymbol{c}| \boldsymbol{c}^{n+1}| \cdots| \boldsymbol{c}^{2n}\right] \in \mathbb{R}^{2n\times2n}$. Taking its determinant, we obtain
		\begin{align*}
			\det\widetilde{A} &= \det\left[\boldsymbol{c}^2| \cdots| \boldsymbol{c}^n| \displaystyle{\sum_{i=1}^{n}} (-1)^{i-1}y_i (\boldsymbol{c}^{n-i+1} + \boldsymbol{c}^{n+i})| \boldsymbol{c}^{n+1}| \cdots| \boldsymbol{c}^{2n}\right] \\
			&= \det\left[\boldsymbol{c}^2| \cdots| \boldsymbol{c}^n| (-1)^{n-1}y_n\boldsymbol{c}^1| \boldsymbol{c}^{n+1}| \cdots| \boldsymbol{c}^{2n}\right] \\
			&= y_n\det\left[(-1)^{2(n-1)}\boldsymbol{c}^1| \boldsymbol{c}^2| \cdots| \boldsymbol{c}^n| \boldsymbol{c}^{n+1}| \cdots| \boldsymbol{c}^{2n}\right] \\
			&= y_n\det\left[\boldsymbol{c}^1| \cdots| \boldsymbol{c}^{2n}\right].
		\end{align*}
		
		\textbf{Case II.} $m(\widetilde{A}) = r(\widetilde{A}) = n-1 < l(\widetilde{A})$. \smallskip
		
		Here, $\widetilde{A} = \left[\boldsymbol{c}^1| \cdots| \boldsymbol{c}^n| \boldsymbol{c}| \boldsymbol{c}^{n+1}| \cdots| \boldsymbol{c}^{2n-1}\right] \in \mathbb{R}^{2n\times2n}$ and proceeding similar to Case I, we get
		\begin{align*}
			\det\widetilde{A} 
			= y_n\det\left[\boldsymbol{c}^1| \cdots| \boldsymbol{c}^{2n}\right].
		\end{align*}
		
		\textbf{Case III.} $m(\widetilde{A}) = l(\widetilde{A}) = r(\widetilde{A}) = n-1$. \smallskip
		
		From \eqref{A_tilde_general_form}, we have
		\begin{align*}
			\det\widetilde{A} &= \det\left[\boldsymbol{c}^2| \cdots| \boldsymbol{c}^n| \displaystyle{\sum_{i=1}^{n}} (-1)^{i-1}y_i \left(\boldsymbol{c}^{n-i+1} + \boldsymbol{c}^{n+i}\right)| \boldsymbol{c}^{n+1}| \cdots| \boldsymbol{c}^{2n-1}\right]_{I\times[d]} \\
			&= \det\left[\boldsymbol{c}^2| \cdots| \boldsymbol{c}^n| (-1)^{n-1}y_n\left(\boldsymbol{c}^1+\boldsymbol{c}^{2n}\right)| \boldsymbol{c}^{n+1}| \cdots| \boldsymbol{c}^{2n-1}\right]_{I\times[d]} \\
			&= (-1)^{n-1}y_n\det\left[\boldsymbol{c}^2| \cdots| \boldsymbol{c}^n| \boldsymbol{c}^1| \boldsymbol{c}^{n+1}| \cdots| \boldsymbol{c}^{2n-1}\right]_{I\times[d]} \\
			&+ (-1)^{n-1}y_n\det\left[\boldsymbol{c}^2| \cdots| \boldsymbol{c}^n| \boldsymbol{c}^{2n}| \boldsymbol{c}^{n+1}| \cdots| \boldsymbol{c}^{2n-1}\right]_{I\times[d]} \\
			&= y_n\left(\det\left[\boldsymbol{c}^1| \cdots| \boldsymbol{c}^{2n-1}\right]_{I\times[d]}+ \det\left[\boldsymbol{c}^2| \cdots| \boldsymbol{c}^{2n}\right]_{I\times[d]}\right).           
		\end{align*}
		Observe that in all three cases above, the sign of $\det\widetilde{A}$ remains unchanged for any $y_n>0$. Thus, assign any positive value to $y_n$. Now, assume $y_n, y_{n-1}, \ldots, y_{n-k}>0$ are chosen for some $0\leq k\leq n-2$, such that the determinants of all submatrices $\widetilde{A}\in\mathbb{R}^{d\times d}$ of $\widehat{A}$ with $m(\widetilde{A})\geq n-k-1$ have the sign $\epsilon_{d}$. We now inductively find $y_{n-k-1}>0$ such that the determinant of $\widetilde{A}\in\mathbb{R}^{d\times d}$ with $m(\widetilde{A}) = n-k-2$ has the sign $\epsilon_{d}$. Consider the following cases. \smallskip
		
		\textbf{Case I.} $m(\widetilde{A}) = l(\widetilde{A}) = n-(k+2) < r(\widetilde{A})$. \smallskip
		
		Using \eqref{A_tilde_general_form}, we obtain the following
		\begin{align} \label{caseII}
			\det\widetilde{A} &= \det\left[\boldsymbol{c}^{k+3}| \cdots| \boldsymbol{c}^n| \displaystyle{\sum_{i=1}^{n}} (-1)^{i-1}y_i (\boldsymbol{c}^{n-i+1} + \boldsymbol{c}^{n+i})| \boldsymbol{c}^{n+1}| \cdots| \boldsymbol{c}^{n+r(\widetilde{A})}\right]_{I\times[d]} \nonumber \\
			&= (-1)^{(n-k-1)-1}y_{n-k-1}\det\left[\boldsymbol{c}^{k+3}| \cdots| \boldsymbol{c}^n| \boldsymbol{c}^{k+2}| \boldsymbol{c}^{n+1}| \cdots| \boldsymbol{c}^{n+r(\widetilde{A})}\right]_{I\times[d]} \nonumber \\
			& + \displaystyle{\sum_{i=n-k}^{n}} (-1)^{i-1}y_i \det\left[\boldsymbol{c}^{k+3}| \cdots| \boldsymbol{c}^n| \boldsymbol{c}^{n-i+1}| \boldsymbol{c}^{n+1}| \cdots| \boldsymbol{c}^{n+r(\widetilde{A})}\right]_{I\times[d]} \nonumber \\
			& + \displaystyle{\sum_{i=r(\widetilde{A})+1}^{n}} (-1)^{i-1}y_i \det\left[\boldsymbol{c}^{k+3}|\cdots| \boldsymbol{c}^n| \boldsymbol{c}^{n+i}| \boldsymbol{c}^{n+1}| \cdots| \boldsymbol{c}^{n+r(\widetilde{A})}\right]_{I\times[d]}.
		\end{align}
		Note that $y_{n-k-1}$ is the unknown term in the above summation, while the multipliers $y_{i}$ for $i = n-k, \ldots, n$ in the penultimate summation in \eqref{caseII} are known from the hypothesis. Since $r(\widetilde{A}) + 1 \geq n-k$, all $y_i$ for $i=r(\widetilde{A})+1,\ldots,n$ are also known in the final summation in \eqref{caseII}. As $\widetilde{A}$ is a $d\times d$ matrix, we multiply \eqref{caseII} by $\epsilon_{d}$ and solve for $y_{n-k-1}$ using $\epsilon_{d}(\det\widetilde{A}) >0$ to obtain the following lower bound for $y_{n-k-1}$:
		\begin{align*} 
			y_{n-k-1}>\frac{-\left(\splitfrac{\displaystyle{\sum_{i=n-k}^{n}} (-1)^{i-1}y_i\det\left[\boldsymbol{c}^{k+3}| \cdots| \boldsymbol{c}^n| \boldsymbol{c}^{n-i+1}| \boldsymbol{c}^{n+1}| \cdots| \boldsymbol{c}^{n+r(\widetilde{A})}\right]_{I\times[d]}} 
			{+\displaystyle{\sum_{i=r(\widetilde{A})+1}^{n}} (-1)^{i-1}y_i\det\left[\boldsymbol{c}^{k+3}|\cdots| \boldsymbol{c}^n| \boldsymbol{c}^{n+i}| \boldsymbol{c}^{n+1}| \cdots| \boldsymbol{c}^{n+r(\widetilde{A})}\right]_{I\times[d]}}\right)}
			 {\det\left[\boldsymbol{c}^{k+2}| \boldsymbol{c}^{k+3}| \cdots| \boldsymbol{c}^n| \boldsymbol{c}^{n+1}| \cdots| \boldsymbol{c}^{n+r(\widetilde{A})}\right]_{I\times[d]}}.
		\end{align*} 
		\smallskip
		
		\textbf{Case II.} $m(\widetilde{A}) = r(\widetilde{A}) = n-(k+2) < l(\widetilde{A})$. \smallskip
		
		By a similar argument as above, one can obtain
		\begin{align*} 
			y_{n-k-1}>\frac{-\left(\splitfrac{\displaystyle{\sum_{i=l(\widetilde{A})+1}^{n}} (-1)^{i-1}y_i \det\left[\boldsymbol{c}^{n-(l(\widetilde{A})-1)}| \cdots| \boldsymbol{c}^n| \boldsymbol{c}^{n-i+1}| \boldsymbol{c}^{n+1}| \cdots| \boldsymbol{c}^{2n-(k+2)}\right]_{I\times[d]}}
			{+ \displaystyle{\sum_{i=n-k}^{n}} (-1)^{i-1}y_i\det\left[\boldsymbol{c}^{n-(l(\widetilde{A})-1)}| \cdots| \boldsymbol{c}^n| \boldsymbol{c}^{n+i}| \boldsymbol{c}^{n+1}| \cdots| \boldsymbol{c}^{2n-(k+2)}\right]_{I\times[d]}}\right)}
			{\det\left[\boldsymbol{c}^{n-(l(\widetilde{A})-1)}| \cdots| \boldsymbol{c}^n| \boldsymbol{c}^{n+1}| \cdots| \boldsymbol{c}^{2n-(k+2)}| \boldsymbol{c}^{2n-(k+1)}\right]_{I\times[d]}}.
		\end{align*}
		
		\smallskip
		
		\textbf{Case III.} $m(\widetilde{A}) = l(\widetilde{A}) = r(\widetilde{A}) = n-(k+2)$. \smallskip
		
		In this case \eqref{A_tilde_general_form} gives
		\begin{align}\label{caseIII}
			\det\widetilde{A} &= \det\left[\boldsymbol{c}^{k+3}| \cdots| \boldsymbol{c}^n| \displaystyle{\sum_{i=1}^{n}} (-1)^{i-1}y_i (\boldsymbol{c}^{n-i+1} + \boldsymbol{c}^{n+i})| \boldsymbol{c}^{n+1}| \cdots| \boldsymbol{c}^{2n-(k+2)}\right]_{I\times[d]} \nonumber \\
			&= \det\left[\boldsymbol{c}^{k+3}| \cdots| \boldsymbol{c}^n| \displaystyle{\sum_{i=n-k-1}^{n}} (-1)^{i-1}y_i(\boldsymbol{c}^{n-i+1}+\boldsymbol{c}^{n+i})| \boldsymbol{c}^{n+1}| \cdots| \boldsymbol{c}^{2n-(k+2)}\right]_{I\times[d]} \nonumber \\
			&=y_{n-k-1}\det\left[(-1)^{2(n-k-1)}\boldsymbol{c}^{k+2} | \boldsymbol{c}^{k+3}| \cdots| \boldsymbol{c}^n| \boldsymbol{c}^{n+1}| \cdots| \boldsymbol{c}^{2n-(k+2)}\right]_{I\times[d]} \nonumber \\ 
			& + y_{n-k-1}\det\left[\boldsymbol{c}^{k+3}| \cdots| \boldsymbol{c}^n| \boldsymbol{c}^{n+1}| \cdots| \boldsymbol{c}^{2n-(k+2)}| (-1)^{2(n-k-2)}\boldsymbol{c}^{2n-(k+1)}\right]_{I\times[d]} \nonumber \nonumber \\
			& + \det\left[\boldsymbol{c}^{k+3}| \cdots| \boldsymbol{c}^n| \displaystyle{\sum_{i=n-k}^{n}} (-1)^{i-1}y_i (\boldsymbol{c}^{n-i+1} + \boldsymbol{c}^{n+i})| \boldsymbol{c}^{n+1}| \cdots| \boldsymbol{c}^{2n-(k+2)}\right]_{I\times[d]}.
		\end{align}
		Using $\epsilon_d(\det\widetilde{A}) >0$ and \eqref{caseIII}, we obtain the following lower bound for $y_{n-k-1}$:
	    \begin{align*} 
			y_{n-k-1}>\frac{-\left(\splitfrac{\displaystyle{\sum_{i=n-k}^{n}} (-1)^{i-1}y_i\det\left[\boldsymbol{c}^{k+3}| \cdots| \boldsymbol{c}^n| \boldsymbol{c}^{n-i+1}| \boldsymbol{c}^{n+1}| \cdots| \boldsymbol{c}^{2n-(k+2)}\right]_{I\times[d]}}
			{+ \displaystyle{\sum_{i=n-k}^{n}} (-1)^{i-1}y_i\det\left[\boldsymbol{c}^{k+3}| \cdots| \boldsymbol{c}^n| \boldsymbol{c}^{n+i}| \boldsymbol{c}^{n+1}| \cdots| \boldsymbol{c}^{2n-(k+2)}\right]_{I\times[d]}}\right)}
			{\det\left[\boldsymbol{c}^{k+2}| \cdots| \boldsymbol{c}^n| \cdots| \boldsymbol{c}^{2n-(k+2)}\right]_{I\times[d]} + \det\left[\boldsymbol{c}^{k+3}| \cdots| \boldsymbol{c}^n| \cdots| \boldsymbol{c}^{2n-(k+1)}\right]_{I\times[d]}}.
		\end{align*}

		\smallskip
		
		In each of the above cases, one can choose $y_{n-k-1}>0$ such that the sign of $\det\widetilde{A}$ is $\epsilon_{d}$, where $\widetilde{A}\in\mathbb{R}^{d\times d}$ and satisfies $m(\widetilde{A}) = n-(k+2)$. Given that there are a finite number of contiguous square submatrices $\widetilde{A}$ of $\widehat{A}$ containing subcolumns of $\boldsymbol{c}$ with $m(\widetilde{A}) = n-k-2$, we can choose a sufficiently large $y_{n-k-1}>0$ to ensure that all such submatrices $\widetilde{A}$ preserve the sign of their minors. Hence, $\widehat{A}$ is SSR$(\epsilon)$.
		
	\end{proof}
	
	We next prove the main theorem of this section with the help of Theorems~\ref{A} and \ref{Theorem_line_insertion_middle}.
	
	\begin{proof}[Proof of Theorem~\ref{B}] 
		Given $A\in\mathbb{R}^{m\times n}$ is SSR$(\epsilon)$, here are the steps to show that a new SSR matrix can be formed by adding a column between the $k^{th}$ and $(k+1)^{th}$ columns of $A$, where $k \in [n-1]$.
		
		\textbf{Step 1.} Using Theorem~\ref{A}, add lines to the left, right, top, and bottom borders of $A$ to complete it to a square SSR matrix $B_A$, such that the $k^{th}$ and $(k+1)^{th}$ columns of $A$ lie in the center of $B_A$.
		
		\textbf{Step 2.} Now, insert a column in the middle of $B_A$ using Theorem~\ref{Theorem_line_insertion_middle} to obtain an SSR matrix $\widehat{B}_A$.
		
		\textbf{Step 3.} Remove the additional rows and columns from $\widehat{B}_A$ that were added in Step 1  to $A$.
		
		A row can be inserted between consecutive rows of $A$ by taking its transpose, following the above three steps, and then again taking the transpose of the final matrix. This completes our proof.
	\end{proof}
	
	Next, we show a similar result for an $m\times n$ SSR$_p(\epsilon)$ matrix, where $p < \min\{m,n\}$.
	
	\begin{theorem}\label{Theorem_SSR_k_middle}
		Let $m,n> p\geq1$ be integers and $\epsilon=(\epsilon_1,\ldots,\epsilon_p)$ be any sign pattern. Assume that $A$ is an $m\times n$ SSR$_p(\epsilon)$ matrix. Then, a column/row can be added between any two consecutive columns/rows of $A$ such that the resulting matrix is SSR$_p(\epsilon)$.
	\end{theorem}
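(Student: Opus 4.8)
The plan is to follow the same strategy as for Theorem~\ref{B}, but to insert the new column directly into a suitably padded copy of $A$ rather than routing through a square matrix. Suppose we want to insert a column between columns $k$ and $k+1$ of $A$. First I would use Remark~\ref{Remark_SSR_p_border} to append $\max\{0,\lceil p/2\rceil-k\}$ columns to the left border and $\max\{0,\lceil p/2\rceil-(n-k)\}$ columns to the right border of $A$, obtaining an $m\times N$ SSR$_p(\epsilon)$ matrix $\widetilde{A}=[\boldsymbol{c^1}|\cdots|\boldsymbol{c^N}]$ in which the insertion slot now lies between columns $k'$ and $k'+1$ with $k'\geq\lceil p/2\rceil$ and $N-k'\geq\lceil p/2\rceil$. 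Once I produce an $m\times(N+1)$ SSR$_p(\epsilon)$ matrix $\widehat{A}$ by inserting a column into $\widetilde{A}$ at this slot, deleting the padding columns yields an $m\times(n+1)$ SSR$_p(\epsilon)$ completion of $A$, since a submatrix of an SSR$_p$ matrix is SSR$_p$. The row statement then follows by transposition, because a matrix is SSR$_p(\epsilon)$ if and only if its transpose is.

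So the crux is to insert a column into the middle of $\widetilde{A}$. Here I would take
\[
\boldsymbol{c}=\sum_{i=1}^{\lceil p/2\rceil}(-1)^{i-1}y_i\bigl(\boldsymbol{c^{k'-i+1}}+\boldsymbol{c^{k'+i}}\bigr),
\]
with $y_1,\dots,y_{\lceil p/2\rceil}>0$ to be determined, and let $\widehat{A}$ be $\widetilde{A}$ with $\boldsymbol{c}$ placed between columns $k'$ and $k'+1$. By Theorem~\ref{Theorem_Karlin} it is enough to make every contiguous minor of $\widehat{A}$ of size $d\le p$ have sign $\epsilon_d$. A contiguous minor not involving $\boldsymbol{c}$ is a contiguous minor of $\widetilde{A}$ and so already has the right sign. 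For a contiguous $d\times d$ submatrix $\widetilde{A}'$ of $\widehat{A}$ containing $\boldsymbol{c}$, define $l(\widetilde{A}'),r(\widetilde{A}'),m(\widetilde{A}')=\min\{l(\widetilde{A}'),r(\widetilde{A}')\}$ as in the proof of Theorem~\ref{Theorem_line_insertion_middle}; since $l(\widetilde{A}')+r(\widetilde{A}')+1=d\le p$ we get $m(\widetilde{A}')\le\lfloor(d-1)/2\rfloor\le\lceil p/2\rceil-1$, and by multilinearity only the scalars $y_{m(\widetilde{A}')+1},\dots,y_{\lceil p/2\rceil}$ survive in $\det\widetilde{A}'$. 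I would therefore fix $y_{\lceil p/2\rceil},y_{\lceil p/2\rceil-1},\dots,y_1$ in this order: when fixing $y_{m+1}$, consider all contiguous minors $\widetilde{A}'$ with $m(\widetilde{A}')=m$, in each of which $y_{m+1}$ is the only scalar not yet chosen. Expanding $\boldsymbol{c}$ by multilinearity exactly as in Cases~I--III of the proof of Theorem~\ref{Theorem_line_insertion_middle}, the $y_{m+1}$-coefficient of $\det\widetilde{A}'$, after moving the surviving column $\boldsymbol{c^{k'-m}}$ (or $\boldsymbol{c^{k'+m+1}}$) to the appropriate end, equals a (sum of) contiguous $d\times d$ minor(s) of $\widetilde{A}$ of sign $\epsilon_d$; hence the requirement $\epsilon_d\det\widetilde{A}'>0$ becomes a strict lower bound on $y_{m+1}$ whose right-hand side involves only the already-chosen scalars. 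Taking $y_{m+1}$ above the finitely many such bounds makes all minors with $m(\widetilde{A}')=m$ have sign $\epsilon_d$; and for the first stage $m=\lceil p/2\rceil-1$, only $y_{\lceil p/2\rceil}$ occurs and its coefficient already carries sign $\epsilon_d$, so any positive value works. This produces the scalars and hence the SSR$_p(\epsilon)$ matrix $\widehat{A}$.

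The main obstacle is the same bookkeeping that appears in Theorem~\ref{Theorem_line_insertion_middle}: according to whether $l(\widetilde{A}')$ and $r(\widetilde{A}')$ are equal or one exceeds the other, one must verify (i) that exactly $y_{m+1},\dots,y_{\lceil p/2\rceil}$ survive in $\det\widetilde{A}'$, and (ii) that after collecting the signs $(-1)^{i-1}$ from the expansion of $\boldsymbol{c}$ together with the column-transposition signs, the coefficient of $y_{m+1}$ reduces to an honest contiguous minor of $\widetilde{A}$ (so that its sign is known), while the remaining terms only need to be finite, their signs being irrelevant. The one genuinely new point relative to Theorem~\ref{Theorem_line_insertion_middle} is the interplay of the inequalities $k'\geq\lceil p/2\rceil$, $N-k'\geq\lceil p/2\rceil$ with the bound $m(\widetilde{A}')\le\lceil p/2\rceil-1$: the first pair guarantees that all columns $\boldsymbol{c^{k'-i+1}},\boldsymbol{c^{k'+i}}$ with $i\le\lceil p/2\rceil$ used in the definition of $\boldsymbol{c}$ actually exist in $\widetilde{A}$, and the bound on $m(\widetilde{A}')$ explains why only $\lceil p/2\rceil$ scalars suffice (rather than order $p$ of them, as a naive transcription of Theorem~\ref{Theorem_line_insertion_middle} might suggest).
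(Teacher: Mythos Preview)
Your proposal is correct and follows essentially the same route as the paper's proof: pad via Remark~\ref{Remark_SSR_p_border} so that enough columns flank the insertion slot, write the new column as the symmetric alternating combination $\sum_{i=1}^{\lceil p/2\rceil}(-1)^{i-1}y_i(\boldsymbol{c^{k'-i+1}}+\boldsymbol{c^{k'+i}})$, and then choose $y_{\lceil p/2\rceil},\dots,y_1$ in that order exactly as in Theorem~\ref{Theorem_line_insertion_middle}. The only cosmetic differences are that the paper pads to $p-1$ columns on each side (rather than your tighter $\lceil p/2\rceil$) and records that in the base step $m(\widetilde{A}')=\lceil p/2\rceil-1$ only Case~III occurs when $p$ is odd, whereas all three cases occur when $p$ is even; neither point affects your argument.
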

	\begin{proof}
		Let $A\in\mathbb{R}^{m\times n}$ be an SSR$_p(\epsilon)$ matrix, we want to insert a column $\boldsymbol{c}$ between any two successive columns of $A$ such that it remains SSR$_p(\epsilon)$. Assume, without loss of generality, that there are $p-1$ columns on each side of $\boldsymbol{c}$. If more than $p-1$ columns are present on either side, we can ignore all columns after the $(p-1)^{th}$ column, as they do not contribute to contiguous minors of size up to $p\times p$ containing subcolumns of $\boldsymbol{c}$. Alternatively, if fewer than $p-1$ columns are present on either side, by Remark~\ref{Remark_SSR_p_border}, we can add columns at the borders of $A$ to obtain an SSR$_p(\epsilon)$ matrix with exactly $p-1$ columns to the left and to the right of $\boldsymbol{c}$. Thus, we consider
		\begin{align*}
			\widehat{A} = [\boldsymbol{c}^1| \cdots| \boldsymbol{c}^{p-1}| \boldsymbol{c}| \boldsymbol{c}^p| \cdots| \boldsymbol{c}^{2p-2}],
		\end{align*}
		and we need to make it SSR$_p(\epsilon)$. Express $\boldsymbol{c}$ as a linear combination of $2\ceil{\frac{p}{2}}$ columns of $A$ as follows:
		\begin{align*}
			\boldsymbol{c} = \displaystyle{\sum_{i=1}^{\ceil{\frac{p}{2}}}} (-1)^{i-1}y_i(\boldsymbol{c}^{p-i} + \boldsymbol{c}^{p+i-1}), ~ \text{where} ~ y_1, \ldots, y_{\ceil{\frac{p}{2}}} >0.
		\end{align*}
		We adopt the notations defined at the beginning of the proof of Theorem~\ref{Theorem_line_insertion_middle} and note the following:
		\begin{itemize}
			\item[(i)] $0\leq m(\widetilde{A})\leq \ceil{\frac{p}{2}}-1$. If $m(\widetilde{A})\geq \ceil{\frac{p}{2}}$, $\widetilde{A}$ will have at least $p+1$ columns, but we are only interested in contiguous minors of $\widehat{A}$ up to $p\times p$. \smallskip
			
			\item[(ii)] Suppose $0\leq k\leq \ceil{\frac{p}{2}}-1$ and $m(\widetilde{A}) = k$. Then the terms $y_{k+1},\ldots,y_{\ceil{\frac{p}{2}}}$ will contribute to the determinant of $\widetilde{A}$, while $y_1,\ldots,y_k$ will not. In particular, when $m(\widetilde{A}) = \ceil{\frac{p}{2}}-1$, only $y_{\ceil{\frac{p}{2}}}$ will show up in the determinant of $\widetilde{A}$.
		\end{itemize}
		
		Set $p_c := \ceil{\frac{p}{2}}$. Our goal is to find the coefficients $y_{1},\ldots,y_{p_c} > 0$ such that $\widehat{A}$ becomes SSR$_p(\epsilon)$. To show this, it suffices to consider all contiguous minors of $\widehat{A}$ that contain subcolumns of $\boldsymbol{c}$ and are up to size $p\times p$. The proof strategy is similar to that in Theorem~\ref{Theorem_line_insertion_middle}. Using point (ii), we first consider the cases where $m(\widetilde{A})= {p_c}-1$ to determine $y_{p_c}$. In this case, we have the following three possibilities:
		\[\textbf{Case I.}~~~m(\widetilde{A}) = l(\widetilde{A}) < r(\widetilde{A}), \quad
		  \textbf{Case II.}~~~m(\widetilde{A}) = r(\widetilde{A}) < l(\widetilde{A}), \quad
		  \textbf{Case III.}~~~m(\widetilde{A}) = l(\widetilde{A}) = r(\widetilde{A}).\]
		
		Simple calculations show that all three cases hold when $p$ is even, whereas only Case III is satisfied when $p$ is odd. Thus, considering all three cases for even $p$ and only Case III for odd $p$, and proceeding as in Theorem~\ref{Theorem_line_insertion_middle}, we find that any  $y_{p_c}>0$ can be chosen. 
		
		We then complete the proof by showing that $y_{p_c-k-1}>0$ can be found inductively, assuming $y_{p_c}, y_{p_c-1}, \ldots, y_{p_c - k} > 0$ are obtained for some $0\leq k\leq p_c -2$. To find $y_{p_c-k-1}>0$, we again consider the above three cases with $m(\widetilde{A}) = p_c-k-2$ to obtain an upper bound for $y_{p_c-k-1}$. Note that all three cases now hold for both even and odd $p$. This completes our proof.
	\end{proof}
	
	\section{Algorithm for constructing SSR and SSR$_p$ matrices}\label{Section_SSR_Construction}
	
	The goal of this section is to give an algorithm to construct an SSR matrix for any specified dimension and sign pattern. We mainly use Theorem~\ref{A} to carry out this construction. Here is our algorithm; in Appendix \ref{appendixA}, we will also provide a Python program to implement this algorithm. \smallskip
	
	\textbf{Algorithm 1. add$\_$col$\_$left($\mathbf{A}$)} \smallskip
	
	\textbf{Description:} This algorithm adds a new column to the left of an $m\times n$ SSR$(\epsilon)$ matrix A. The resulting $m\times(n+1)$ matrix will remain SSR$(\epsilon)$ if $m\leq n$ and become SSR$_n(\epsilon)$ with all $(n+1)\times(n+1)$ minors being zero if $m>n$.
	
	\textbf{Arguments:} $A$: An $m\times n$ SSR$(\epsilon)$ matrix.
	
	\textbf{Returns:} An $m\times(n+1)$ SSR$(\epsilon)$ matrix if $m\leq n$. Otherwise, an $m\times(n+1)$ SSR$_n(\epsilon)$ matrix whose all $(n+1)\times(n+1)$ minors are zero. \smallskip
	
	Step 1. Initialize to `row' and `col' the number of rows and columns of $A$, respectively.
	
	Step 2. Take $m = $ row. If row $<$ col, then take $n =$ row. Otherwise, $n =$ col.
	
	Step 3. Initialize $\boldsymbol{c}\in\mathbb{R}^{m}$ and $\boldsymbol{y}\in\mathbb{R}^{n}$ with zeros. Set $k=n$.
	
	Step 4. Take $(\boldsymbol{y})_n = 1$.
	
	Step 5. Update $k$ by setting $k$ to $k-1$.
	
	Step 6. For all contiguous index sets $I\subseteq[m]$ with $|I|=k$, assign the following:
	\begin{align*}
		(\boldsymbol{y})_k = \max_{I} \left\{0, \frac{-\displaystyle{\sum_{i=k+1}^{n}(-1)^{i+k}y_i}\det\left[\boldsymbol{c}^1| \cdots| \boldsymbol{c}^{k-1}| \boldsymbol{c}^i\right]_{I\times[k]}} {\det\left[\boldsymbol{c}^1| \cdots| \boldsymbol{c}^{k}\right]_{I\times[k]}}\right\} + \frac{1}{2}.
	\end{align*}
	
	Here $\boldsymbol{c}^{i}$ denotes the $i^{th}$ column of $A$.
	
	Step 7. If $k>1$, then go to Step 5. Else, go to the next step.
	
	Step 8. Assign $(\boldsymbol{y})_i = (-1)^{i+1}(\boldsymbol{y})_i$ for $i=1,\ldots,n$.
	
	Step 9. If row $\geq$ col, then set $\boldsymbol{c} = A\boldsymbol{y}$ and append $\boldsymbol{c}$ to the left of $A$, i.e., $A = [\boldsymbol{c}| A]\in\mathbb{R}^{m\times(n+1)}$.
	
	Return this $A$ and stop. Else, go to Step 10.
	
	Step 10. Assign $B = A_{[m]\times[m]}$.
	
	Step 11. Set $\boldsymbol{c} = B\boldsymbol{y}$ and add it to the left of $A$, i.e., $A = [\boldsymbol{c}| A]\in\mathbb{R}^{m\times(n+1)}$ and return $A$.
	
	\smallskip
	
	\textbf{Algorithm 2. add$\_$perturbation($\mathbf{A}$)} \smallskip
	
	\textbf{Description:} Adds a small perturbation to the $(1,1)$ entry of an $n\times n$ SSR$_{n-1}(\epsilon)$ matrix $A$, whose determinant is zero, to ensure it becomes SSR with the necessary $\epsilon_n$.
	
	\textbf{Arguments:} $A$: An $n\times n$ SSR$_{n-1}(\epsilon)$ matrix such that $\det A = 0$.
	
	\textbf{Returns:} An $n\times n$ SSR$(\epsilon)$ matrix with the required $\epsilon_n$. \smallskip
	
	Step 1. Initialize to $n$ the dimension of a square matrix $A$.
	
	Step 2. Define a matrix $B:=[\mathbf{e}^1| A_{[n]\times[n]\smallsetminus\{1\}}]\in\mathbb{R}^{n\times n}$, where $\boldsymbol{e}^1=(1,0,\ldots,0)^T\in\mathbb{R}^{n}$.
	
	Step 3. Assign the minimum and maximum absolute values of all $[r]\times [r]$ contiguous minors of matrices $A$ and $B$ to $\lambda$ and $\Lambda$, respectively, for each $r=1,\ldots,n-1$.
	
	Step 4. If sign $\epsilon_{n-1}=\epsilon_n$ then take $0 <\delta < \frac{\lambda}{\Lambda}$. Otherwise, $-\frac{\lambda}{\Lambda} <\delta <0$.
	
	Step 5. Add $\delta$ to the $(1,1)$ entry of $A$ and now return this $A$.
	
	\smallskip
	
	\textbf{Algorithm 3. SSR$\_$construction($\mathbf{m}$, $\mathbf{n}$, sign)} \smallskip
	
	\textbf{Description:} Constructs an $m\times n$ SSR matrix with the sign pattern `sign'.
	
	\textbf{Arguments:} \begin{itemize}
		\item[(i)] $m,n$: Size of the matrix.
		
		\item[(ii)] sign: A sign pattern of length $\min\{m,n\}$.
	\end{itemize}
	 
	\textbf{Returns:} An $m\times n$ SSR matrix with the sign pattern `sign'. \smallskip
	
	Step 1. Assign $i=0$, $\mathrm{min\_dim} = \min\{m,n\}$, and $\mathrm{max\_dim} = \mathrm{max}\{m,n\}$.
	
	Step 2. If the length of `sign' does not match `min$\_$dim', display the message, ``The length of the sign pattern is not correct!'' and stop. Otherwise, proceed to the next step.
	
	Step 3. If $\mathrm{min\_dim} = 1$, go to Step 4. Else, go to Step 5.
	
	Step 4. Display an $m\times n$ matrix of all ones if $\epsilon_1 = 1$ and all negative ones if $\epsilon_1=-1$ and stop.
		
	Step 5. Assign to $A$ the following $2\times2$ matrix according to the sign pattern entered by the user.
	\[\text{If}~\epsilon_1 = 1 ~\text{and}~ \epsilon_2 = 1, ~\text{then}~ A = \begin{pmatrix}
		2 & 1 \\
		1 & 1
	\end{pmatrix}, \quad \text{if} ~ \epsilon_1 = 1 ~\text{and}~ \epsilon_2 = -1, ~\text{then}~ A = \begin{pmatrix}
	1 & 1 \\
	2 & 1
	\end{pmatrix}, \]
	\[\text{if}~\epsilon_1 = -1 ~\text{and}~ \epsilon_2 = 1, ~\text{then}~ A = \begin{pmatrix}
		-2 & -1 \\
		-1 & -1
	\end{pmatrix}, \quad \text{if} ~ \epsilon_1 = -1 ~\text{and}~ \epsilon_2 = -1, ~\text{then}~ A = \begin{pmatrix}
		-1 & -1 \\
		-2 & -1
	\end{pmatrix}. \]
		
	Step 6. If $\mathrm{max\_dim} = 2$, then display $A$ and stop. Else, go to Step 7.
	
	Step 7. Set $i=i+1.$
   
    Step 8. Assign to `row' and `col' the number of rows and columns of $A$, respectively.
		
    Step 9. Call the function add$\_$col$\_$left$(A)$ described in Algorithm 1 above to add a column to $A$.
    
    Step 10. Assign $A^T$ to $A$ and update `row' and `col' with new dimensions of this $A$.
		
    Step 11. If row = col, call add$\_$perturbation$(A)$ described in Algorithm 2. Else, go to Step 12.
	
	Step 12. If $i < 2(\mathrm{min\_dim}) - 3$, then return to Step 7. Otherwise, go to Step 13.
		
	Step 13. If $m=n$, output $A$ and stop. Else, go to Step 14.
		
	Step 14. Call add$\_$col$\_$left$(A)$ $|m-n|$ times to add $|m-n|$ columns to $A$.
		
	Step 15. Return $A$ if $\mathrm{min\_dim}=m$, otherwise $A^T$. \smallskip
	
	\textbf{Algorithm 4. SSR$\_$p$\_$construction($\mathbf{m}$, $\mathbf{n}$, $\mathbf{p}$, sign)} \smallskip
	
	\textbf{Description:} Constructs an $m\times n$ SSR$_p$ matrix with the sign pattern `sign'.
	
	\textbf{Arguments:} \begin{itemize}
		\item[(i)] $m,n$: Size of the matrix.
		
		\item[(ii)] $p$: A positive integer such that $p<\min\{m,n\}$.
		
		\item[(iii)] sign: A sign pattern of length $p$.
	\end{itemize}
	
	\textbf{Returns:} An $m\times n$ SSR$_p(\epsilon)$ matrix. \smallskip
	
	Step 1. If the length of `sign' is not same as $p$, then display the message, ``The length of the sign pattern is not correct!'' and stop. Else, go to Step 2.
	
	Step 2. Call SSR$\_$construction($ p,p,$ sign) described in Algorithm 3. This will return a $p\times p$ SSR matrix $A$ with the sign pattern `sign'.
	
	Step 3. Call add$\_$col$\_$left$(A)$ $|m-p|$ times to add $|m-p|$ columns to $A$ and make it a $p\times m$ SSR matrix.
	
	Step 4. Assign $A^T$ to $A$. Now $A$ is an $m\times p$ SSR$(\epsilon)$ matrix.
	
	Step 5. Call add$\_$col$\_$left$(A)$ $|n-p|$ times to add $|n-p|$ columns to $A$ and make it an $m\times n$ SSR$_p$ matrix with the sign pattern `sign'.
	
	Step 6. Return $A$.
	
	\section*{Acknowledgments}
		We thank Apoorva Khare for a detailed reading of an earlier draft and for providing valuable feedback. We also thank the anonymous referee for providing useful comments and a reference that improved the manuscript. The first author was partially supported by INSPIRE Faculty Fellowship research grant DST/INSPIRE/04/2021/002620 (DST, Govt.~of India), and IIT Gandhinagar Internal Project: IP/IITGN/MATH/PNC/2223/25.

	\appendix
	\section{Python code}\label{appendixA}
	
	Here, we include a Python program for constructing an $m\times n$ SSR matrix of a given size and sign pattern $\epsilon\in\{\pm1\}^{\min\{m,n\}}$.
\begin{lstlisting}[caption={Importing the necessary library.}]
import numpy as np;
\end{lstlisting}
	
\begin{lstlisting}[caption={Definition of the function add$\_$col$\_$left($A$).}]
def add_col_left(A):
    row, col = A.shape; # Get the dimensions of the matrix
    m = row;
    if row < col:
       n = row;
    else:
       n = col;   
    c = np.zeros((m,1)); # Initialize the new column `c' with zeros
    y = np.zeros((n,1)); # Initialize the coefficients of `c' with zeros
    z = np.zeros((m,1)); # Initialize a temporary vector `z' with zeros
    y[n-1] = 1; # Initialize the last element of `y' to 1
    # The below for-loop finds coefficients y_{n-2}, ..., y_0
    for k in range(n-1,0,-1):
        z[k-1] = 1;
        # The below for-loop computes y_{k-1} for all contiguous minors
        for j in range(1,m-k+2):
            y[k-1] = 0;
            # This for-loop is used for summation present in the formula
            for i in range(k+1,n+1):
                C = np.zeros((k,k));
                C[:, 0:k-1] = A[j-1:j+k-1, 0:k-1];
                C[:, k-1] = A[j-1:j+k-1, i-1];
                det_A1 = np.linalg.det(C);
                det_A2 = np.linalg.det(A[j-1:j+k-1, 0:k]);
                y[k-1] = y[k-1] - (-1)**(i+k)*y[i-1]*det_A1/det_A2;
            # Evaluate the upper bound of y_{k-1} for each contiguous minor    
            if y[k-1] > 0:
               if z[k-1] > y[k-1]:
                  y[k-1] = z[k-1] + 1/2;
               else:
                  y[k-1] = y[k-1] + 1/2;
            if y[k-1] <= 0:
               y[k-1] = z[k-1] + 1/2;
            z[k-1] = y[k-1];
    # Compute the new vector `c'
    for i in range(n):
        y[i] = (-1)**i*y[i];
    if row >= col:
       c = A@y; # Matrix multiplication to compute the new column
       A = np.column_stack((c, A)); # Add the new column to the left of A
    else:
       A_copy = A.copy();
       B = A_copy[:, 0:row];
       c = B@y; # Matrix multiplication to compute the new column
       A = np.column_stack((c, A)); # Add the new column to the left of A
    return A;
\end{lstlisting}
	
\begin{lstlisting}[caption={Evaluating the maximum and minimum modulus of the matrix minors.}]
def max_min_minor(llambda, Lambda, b):
    if llambda == 0:
       llambda = b;
    if b > Lambda:
       Lambda = b;
    else:
       if b < llambda:
          llambda = b;
    return llambda, Lambda;
	\end{lstlisting}
	
\begin{lstlisting}[caption={Definition of the function  add$\_$perturbation($A$).}]
def add_perturbation(A):
    row, col = A.shape; # Get the dimensions of the matrix
    n = row; # Since the matrix is assumed to be square, row = col = n
    print('det of A is (zero): ', np.linalg.det(A))
    C = A.copy(); # Create a copy of the matrix to work with
    llambda = 0;
    Lambda = 0;
    # Iterate through all leading principal minors of size k from 1 to n-1
    for k in range(1, n):
        a = np.linalg.det(C[0:k, 0:k]);
        b = np.abs(a);
        # Update llambda and Lambda
        llambda, Lambda = max_min_minor(llambda, Lambda, b);
        if k > 1:
           B = np.zeros((k,k));
           B = C[0:k, 0:k];
           B[:,0] = 0;
           B[0,0] = 1;
           b = np.abs(np.linalg.det(B));
           llambda, Lambda = max_min_minor(llambda, Lambda, b);
    # Calculate the perturbation value (delta) based on the sign pattern
    if sign[n-1] == sign[n-2]:
       delta = -llambda/Lambda;
       i = 5;
       while delta < 0:
             delta = llambda/Lambda;
             delta = delta - (1/10)**i;
             i = i+1;
    if sign[n-1] != sign[n-2]:
       delta = llambda/Lambda;
       i = 5;
       while delta > 0:
             delta = -llambda/Lambda;
             delta = delta + (1/10)**i;
             i = i+1;
    print('delta = ', delta)
    A[0,0] = A[0,0] + delta;
    print('Det (after pert) = ', np.linalg.det(A))
    return A;
\end{lstlisting}
	
\begin{lstlisting}[caption={Definition of the function SSR$\_$construction($m$, $n$, sign).}]
def SSR_construction(m, n, sign):
    min_dim = min(m, n);
    max_dim = max(m, n);
    if len(sign) != min_dim:
       print('The length of sign pattern is not correct!')
    else:
       if min_dim == 1:
          if sign[0] == 1:
             A = np.array([[1]]);
             for i in range(max_dim-1):
                 A = np.column_stack((1, A));
             if min_dim == n:
                return A;
             else:
                return np.transpose(A);
          else:
                A = np.array([[-1]]);
                for i in range(max_dim-1):
                    A = np.column_stack((-1, A));
                if min_dim == n:
                   return A;
                else:
                   return np.transpose(A);
       else:
          if sign[0] == 1 and sign[1] == 1:
             A = np.array([[2, 1],
                           [1, 1]]);
          elif sign[0] == 1 and sign[1] == -1:
             A = np.array([[1, 1],
                           [2, 1]]);
          elif sign[0] == -1 and sign[1] == 1:
             A = np.array([[-2, -1],
                           [-1, -1]]);
          else:
             A = np.array([[-1, -1],
                           [-2, -1]]);
          if max_dim <= 2:
             return A;
          else:
             # Construct square SSR of dimension min{m, n}
             for square in range(1,2*min_dim-3):
                 row, col = A.shape;
                 A = add_col_left(A);
                 A = np.transpose(A);
                 row, col = A.shape;
                 if row == col:
                    A = add_perturbation(A);
             if m == n:
                return A;
             else:
                for s in range(np.abs(m-n)):
                    A = add_col_left(A);
                if min_dim == m:
                   return A;
                else:
                   return np.transpose(A);
\end{lstlisting}
	
\begin{lstlisting}[caption={Definition of the function SSR$\_$p$\_$construction($m$, $n$, $p$, sign).}]
def SSR_p_construction(m, n, p, sign):
    if len(sign) != p:
       print('The length of sign pattern is not correct!')
    else:
       A = SSR_construction(p, p, sign);
       for i in range(np.abs(m-p)):
           A = add_col_left(A);
       A = np.transpose(A);
       for i in range(np.abs(n-p)):
           A = add_col_left(A);
       return A;
\end{lstlisting}
	
\end{document}